\documentclass[12pt]{amsart}
\usepackage{amssymb}
\usepackage{amsmath}
\usepackage{amsthm} 
\usepackage{graphicx,subcaption}
\usepackage[utf8]{inputenc}
\usepackage{amsfonts}
\usepackage{graphicx}
\usepackage[hidelinks]{hyperref}
\usepackage{xcolor}
\addtolength{\oddsidemargin}{-1.5cm}
\addtolength{\evensidemargin}{-1.5cm}
\addtolength{\textwidth}{3cm}
\addtolength{\topmargin}{-1cm}
\addtolength{\textheight}{1.8cm}
\linespread{1.07}
\setlength{\parskip}{0.6\baselineskip}

\numberwithin{equation}{section}
\newtheorem{theorem}{Theorem}[section]

\newtheorem{lemma}[theorem]{Lemma}
\newtheorem{claim}[theorem]{Claim}

\newtheorem{example}[theorem]{Example}
 \newtheorem{remark}[theorem]{Remark}\newtheorem{proposition}[theorem]{Proposition}
\newtheorem{corollary}[theorem]{Corollary} \newcommand{\La}{\Lambda}
\newcommand{\R}{{\mathbb R}} \newcommand{\Q}{{\mathbb Q}} \newcommand{\Z}{{\mathbb Z}} \newcommand{\N}{{\mathbb N}}

\newcommand{\Cc}{{\mathbb C}}\newcommand{\vv}{{\bf v}}   
\newcommand{\supp}{{\rm supp\,}}

\title[Sampling in the SIS generated by the bivariate Gaussian]{Sampling in the shift-invariant space generated by the bivariate Gaussian function}
\thanks{This research was funded in whole or in part by the Austrian Science Fund (FWF): 10.55776/Y1199 (J. L. R.) and 10.55776/P33217 (I. Z.). For open access purposes, the authors have applied a CC BY public copyright license to any author-accepted manuscript version arising from this submission.}

\author{Jos\'e Luis Romero}
\address[J. L. Romero]{Faculty of Mathematics, University of Vienna, Oskar-Morgenstern-Platz 1, A-1090 Vienna, Austria, and Acoustics Research Institute, Austrian Academy of Sciences, Dr. Ignaz Seipel-Platz 2,	AT-1010 Vienna, Austria}
\email{jose.luis.romero@univie.ac.at}

 \author{Alexander Ulanovskii}
 \address[A. Ulanovskii]{Department of Mathematics and Physics, University of Stavanger, 4036 Stavanger, Norway}
 \email{alexander.ulanovskii@uis.no}
 
 \author{Ilya Zlotnikov}
 \address[I. Zlotnikov]{Faculty of Mathematics, University of Vienna, Oskar-Morgenstern-Platz 1,
		A-1090 Vienna, Austria}
\email{ilia.zlotnikov@univie.ac.at}

\keywords{Sampling, shift-invariant space, bivariate Gaussian, Gabor frame}

\subjclass{42C40,42C30,94A20,32A10,30H05,30B60}

\begin{document}
\begin{abstract}
We study the space spanned by the integer shifts of a bivariate Gaussian function and the problem of reconstructing any function in that space from samples scattered across the plane. We identify a large class of lattices, or more generally semi-regular sampling patterns spread along parallel lines, that lead to stable reconstruction while having densities close to the critical value given by Landau's limit. At the critical density, we construct examples of sampling patterns for which reconstruction fails.

In the same vein, we also investigate continuous sampling along non-uniformly scattered families of parallel lines and identify the threshold density of line configurations at which reconstruction is possible. In a remarkable contrast with Paley-Wiener spaces, the results are completely different for lines with rational or irrational slopes.

Finally, we apply the sampling results to Gabor systems with bivariate Gaussian windows. As a main contribution, we provide a large list of new examples of Gabor frames with non-complex lattices having volume close to 1.
\end{abstract}
 \date{}\maketitle

\section{Introduction}
\subsection{Sampling sets}
In this article we study sampling in the shift-invariant space generated by the bivariate Gaussian function: 
\begin{align}\label{eq_i2}
V^2_a(\mathbb{R}^2):=\Big\{f(x,y)=\sum_{(n,m)\in\mathbb Z^2}c_{n,m}e^{-a(x-n)^2-a(y-m)^2}:c\in \ell^2(\mathbb Z^2)\Big\}.
\end{align}
Here $a>0$ is a \emph{shape parameter}, and our results are mostly independent of it. The main problem is to determine if a given discrete set $\Lambda \subset \mathbb{R}^2$ is a \emph{sampling set {\rm(}SS{\rm)}} for the space $V^2_a(\mathbb{R}^2)$, that is, whether there exist positive constants $A,B$ such that
\begin{align}\label{eq_i1}
A\|f\|_2^2 \leq \sum_{\lambda \in \Lambda} |f(\lambda)|^2 \leq B \|f\|_2^2, \qquad f \in V^2_a(\mathbb{R}^2).
\end{align}
The basic intuition is that functions in the space $V^2_a(\mathbb{R}^2)$ have one degree of freedom per unit area. One may thus expect a set $\Lambda \subset \mathbb{R}^2$ to satisfy the the \emph{sampling inequalities}
\eqref{eq_i1} if it has, on average, at least one point per unit area.

Conclusive results consistent with that intuition are available for the shift-invariant space generated by the univariate Gaussian function:
\begin{align}\label{eq_i2p}
V^2_a(\mathbb{R}):=\Big\{f(x)=\sum_{n\in\mathbb Z}c_ne^{-a(x-n)^2}:c\in \ell^2(\mathbb Z)\Big\}.
\end{align}
The results concern a \emph{separated set} $\Gamma \subseteq \mathbb{R}$, i.e.,
\begin{equation}\label{sep_const}
\delta(\Gamma):= \inf_{\gamma,\gamma'\in\Gamma, \gamma\not=\gamma'} |\gamma-\gamma'| > 0    
\end{equation}
and are formulated in terms of Beurling's lower density
\begin{align}\label{eq_i5}
	D^-(\Gamma) = \liminf_{R\to\infty} \inf_{x\in \mathbb{R}} \tfrac{1}{R} \# \Gamma \cap [x-R/2,x+R/2].
\end{align}
The following is a special case of \cite[Theorem 4.4]{grs}. 
\begin{theorem}\label{th_i3}
Let $\Gamma \subseteq \mathbb{R}$ be separated.
\begin{itemize}
\item[(a)] If $D^-(\Gamma)>1$ then $\Gamma$ is a sampling set for $V^2_a(\mathbb{R})$.
\item[(b)] If $D^-(\Gamma)<1$ then $\Gamma$ is not a sampling set for $V^2_a(\mathbb{R})$.
\end{itemize}
In addition, at the critical density $D^-(\Gamma)=1$ both situations are possible: for example, $\Gamma=\mathbb{Z}$ is a sampling set for $V^2_a(\mathbb{R})$ while $\Gamma=\mathbb{Z}\setminus\{0\}$ is not. 
\end{theorem}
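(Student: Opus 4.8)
The plan is to prove Theorem~\ref{th_i3} by transferring the problem to a complex-analytic setting, where sampling in $V^2_a(\mathbb{R})$ becomes a statement about entire functions of a controlled growth. The key structural fact is the Bargmann-type transform: there is an isometric (up to normalization) identification between $V^2_a(\mathbb{R})$ and a space of entire functions obtained by summing a lattice of Gaussians, and functions in this space have the form of an entire function $F$ times a fixed Gaussian weight. Concretely, writing $f(x)=\sum_n c_n e^{-a(x-n)^2}$, one can factor $f(x)=e^{-ax^2}\Phi(x)$ where $\Phi(x)=\sum_n c_n e^{2anx - an^2}$ is (the restriction to $\R$ of) an entire function that is quasi-periodic and of order $2$. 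Thus a sampling inequality for $f$ on $\Gamma$ translates into a weighted sampling inequality for the associated entire function, and one expects the density threshold $D^-(\Gamma)=1$ to match exactly the Nyquist density dictated by the period and the Gaussian decay.

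\smallskip

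Granting parts (a) and (b) as already established in \cite{grs}, the remaining work is the critical-density claim, which is the genuinely new content of the final statement. First I would verify that $\Gamma=\Z$ is a sampling set. This should reduce to the observation that the coefficient sequence $c=(c_n)$ is recovered from the samples $f(m)$, $m\in\Z$, via a convolution: $f(m)=\sum_n c_n e^{-a(m-n)^2}=(c * g)(m)$ where $g(k)=e^{-ak^2}$. Since $g$ has a strictly positive, bounded, and bounded-below symbol on the torus (its Fourier series $\sum_k e^{-ak^2}e^{-2\pi i k\xi}$ is a theta function, manifestly positive), the convolution operator is boundedly invertible on $\ell^2(\Z)$. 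Combined with the norm equivalence $\|f\|_2^2 \asymp \|c\|_{\ell^2}^2$ that defines the space, this gives $\sum_m |f(m)|^2 \asymp \|c\|_{\ell^2}^2 \asymp \|f\|_2^2$, which is exactly \eqref{eq_i1}.

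\smallskip

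Next I would show that $\Gamma=\Z\setminus\{0\}$ fails to be a sampling set. The natural strategy is to exhibit a sequence of functions $f_j\in V^2_a(\R)$ with $\|f_j\|_2$ bounded below but $\sum_{m\neq 0}|f_j(m)|^2\to 0$; or, more cleanly, to produce a single nonzero $f$ vanishing on $\Z\setminus\{0\}$, which immediately breaks the lower inequality since the left-hand side of \eqref{eq_i1} is then zero while $\|f\|_2>0$. Passing to the entire-function picture via $f(x)=e^{-ax^2}\Phi(x)$, vanishing on $\Z\setminus\{0\}$ means $\Phi$ vanishes there. One wants an $\ell^2$ coefficient sequence $c$ whose associated theta-like function $\Phi$ has exactly the zero set $\Z\setminus\{0\}$ (or at least contains it), while $\Phi(0)\neq 0$. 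A convenient construction uses the classical Jacobi theta function, whose zeros form a shifted lattice; a suitable modulation/translation produces an entire function with simple zeros precisely at the nonzero integers and nonvanishing at $0$, and one then checks that its coefficient sequence lies in $\ell^2$ so that the resulting $f$ is a genuine, nonzero element of $V^2_a(\R)$.

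\smallskip

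I expect the main obstacle to be the membership/decay bookkeeping in the failing example: one must ensure that the entire function interpolating the prescribed zero set has coefficients $c\in\ell^2(\Z)$ and the right growth so that $f=e^{-ax^2}\Phi\in V^2_a(\R)$ is nonzero, rather than merely constructing a function with the correct zeros in some larger space. The positive direction for $\Z$ is comparatively routine once the convolution/theta-function positivity is invoked. The delicate interplay is that the same theta function governing invertibility for $\Z$ is the one whose translate/modulate supplies the nontrivial kernel element for $\Z\setminus\{0\}$, so the borderline nature of the critical density manifests as the difference between a convolution operator being invertible and its becoming merely injective-with-dense-range once a single sampling point is removed.
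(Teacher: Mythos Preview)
The paper does not prove Theorem~\ref{th_i3}: it is quoted as a special case of \cite[Theorem~4.4]{grs}, with no argument supplied for (a), (b), or the critical-density examples. There is thus no ``paper's own proof'' to compare against; your proposal already goes further than the paper by sketching the critical cases.

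Your sketch for those cases is correct. The argument that $\Gamma=\Z$ is sampling via the convolution identity $f(m)=(c*g)(m)$, $g(k)=e^{-ak^2}$, and the strict positivity of the theta symbol $\sum_k e^{-ak^2}e^{-2\pi i k\xi}$ is the standard one and needs no elaboration. For $\Gamma=\Z\setminus\{0\}$ your theta-function construction works, but there is a shorter route that entirely sidesteps the ``membership/decay bookkeeping'' you flag as the main obstacle: once you know the sampling map $S:f\mapsto (f(m))_{m\in\Z}$ is an isomorphism $V^2_a(\R)\to\ell^2(\Z)$ (which is exactly the $\Z$ case just proved), the function $f:=S^{-1}(\delta_0)$ automatically lies in $V^2_a(\R)$, satisfies $f(0)=1$, and vanishes on $\Z\setminus\{0\}$. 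No explicit theta computation or coefficient estimate is needed.
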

Theorem \ref{th_i3} parallels a classical result for
\emph{bandlimited functions}, that is, functions whose
Fourier transforms $\hat{f}(\xi) = \int_\R f(x) e^{-2\pi i x \xi} \,dx$ are compactly supported. Indeed, the statement of Theorem \ref{th_i3} holds verbatim 
if $V^2_a(\mathbb{R})$ is replaced by the \emph{Paley-Wiener space}:
\begin{align*}
PW_{[-1/2,1/2]}(\R) = \big\{f \in L^2(\R):\, \supp(\hat{f}) \subset [-1/2,1/2]\big\}.
\end{align*}

The sampling theory for bivariate functions is comparatively less developed than the one for univariate ones. In the classical setting of functions bandlimited to a compact set $\Omega \subset \R^2$, Landau \cite{la67} showed that the density condition 
\begin{align*}
D^{-}(\Lambda) := \liminf_{R\to\infty} \inf_{x\in \mathbb{R}^2} \tfrac{1}{R^2} \# \Lambda \cap [x-R/2,x+R/2]^2 \geq |\Omega|
\end{align*}
is necessary for a set $\Lambda \subset \R^2$ to be a SS for the Paley-Wiener space
\begin{align*}
PW_{\Omega}(\R^2) = \{f \in L^2(\R^2):\, \supp(\hat{f}) \subset \Omega\}.
\end{align*}
Above $|\Omega|$ stands for the measure of $\Omega$. Landau's result holds also for compact sets in $\R^d$.

Simple examples show that no corresponding sufficient condition for sampling can be formulated in terms of Beurling's lower density $D^{-}(\Lambda)$. Nevertheless, when $\Omega$ is a symmetric convex body, Beurling's maximal gap theorem \cite{MR0427956,MR2917231} provides a useful sufficient sampling condition in terms of the covering radius rather than the density of a set $\Lambda$, and is instrumental to construct sampling lattices, and other simple sampling patterns favored by practitioners.

Similarly, sampling sets $\Lambda$ for the space of functions spanned by the shifts of the bivariate Gaussian \eqref{eq_i2} satisfy $D^{-}(\Lambda) \geq 1$ but cannot be described solely in terms of Beurling's lower density $D^{-}(\Lambda)$, even excluding the critical case $D^{-}(\Lambda) = 1$. In fact, it is easy to show that
\begin{align}\label{eq_i6p}
\Lambda = \Gamma_1 \times \Gamma_2 \mbox{ is a SS for }V^2_a(\mathbb{R}^2)
\Longleftrightarrow \Gamma_1 \mbox{ and } \Gamma_2 \mbox{ are SS for } V^2_a(\mathbb{R}).
\end{align}
Thus, by Theorem \ref{th_i3}, for a product set $\Lambda = \Gamma_1 \times \Gamma_2$ with $\Gamma_j \subset \mathbb{R}$ separated, 
\begin{align}\label{eq_i6}
D^-(\Gamma_j) >1, j=1,2 \Rightarrow \Lambda \mbox { SS for }V^2_a(\R^2) \Rightarrow D^-(\Gamma_j) \geq 1, j=1,2.
\end{align}

As a consequence, the set $\Lambda = \frac{1}{\alpha} \mathbb{Z}\times\frac{1}{\beta}\mathbb{Z}$ with $0<\alpha<1$ and $\beta>0$ fails to be a sampling set for $V^2_a(\R^2)$ even though its density
$D^{-}(\Lambda)=\alpha \beta$ can be arbitrary large, if $\beta$ is taken to be large. On the other hand, a squared lattice of the form $\Lambda= (1+\varepsilon)^{-1} \big(\mathbb{Z} \times \mathbb{Z}\big)$ with $\varepsilon>0$ is a sampling set for $V^2_a(\R^2)$ and its density 
$D^{-}(\Lambda) = (1+\varepsilon)^2$ can be arbitrarily close to 1.

Our first result is in the spirit of
\eqref{eq_i6}, and gives conditions for \emph{slanted point configurations} to be sampling sets for $V^2_a(\R^2)$.
\begin{theorem}\label{th_samp}
Let $p,q \in \mathbb{Z}$ be relatively prime and set $\sigma := \sqrt{p^2+q^2}$.
For separated $\Gamma_1, \Gamma_2 \subset \mathbb{R}$ consider the set
\begin{align}\label{eq_lambdaset}
\Lambda := \begin{bmatrix} p/\sigma & - q/\sigma\\
q/\sigma & p/\sigma
\end{bmatrix} \cdot \tfrac{1}{\sigma} \Gamma_1 \times \sigma \Gamma_2.
\end{align}
\begin{itemize}
\item[(a)] If $\Gamma_1, \Gamma_2 \subset \mathbb{R}$ are separated and
\begin{equation}\label{eq_dens_1}
    D^{-}(\Gamma_1) > 1 \quad \text{and} \quad D^{-}(\Gamma_2) > 1 
\end{equation}
or
\begin{equation}\label{eq_dens_2}
    D^{-}(\Gamma_1) > \frac{1}{\sigma^2} \quad \text{and} \quad D^{-}(\Gamma_2) > \sigma^2 
\end{equation}
then $\Lambda$ is a sampling set for $V^2_a(\mathbb{R}^2)$.
\item[(b)] If $\Lambda$ is a sampling set for $V^2_a(\mathbb{R}^2)$ with $\Gamma_1,\Gamma_2$ separated, then
\begin{equation}\label{eq_not_dens}
D^{-}(\Gamma_1) \ge \frac{1}{\sigma^2}, 
\quad D^{-}(\Gamma_2) \ge 1, \quad\mbox{and} \quad D^{-}(\Gamma_1) D^{-}(\Gamma_2) \ge 1.
\end{equation}
\item[(c)] There exists a separated set $\Gamma_1 \subset \R$ with $D^{-}(\Gamma_1) = \frac{1}{\sigma^2}$ such that, for any separated $\Gamma_2 \subset \R$, the set $\Lambda$ is not a sampling set for $V^2_a(\R^2)$. Similarly, there exists a separated set $\Gamma_2 \subset \R$ with $D^{-}(\Gamma_2) = 1$ such that for any separated set $\Gamma_1 \subset \R$, the set $\Lambda$ is not a sampling set for $V^2_a(\R^2)$.
\end{itemize}
\end{theorem}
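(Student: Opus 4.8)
The plan is to exploit the rotation invariance of the Gaussian to reduce the two failure statements to the one-dimensional critical counterexample $\Z\setminus\{0\}$ furnished by Theorem \ref{th_i3}. Write $R=\begin{bmatrix} p/\sigma & -q/\sigma\\ q/\sigma & p/\sigma\end{bmatrix}$, so that $\Lambda=R\cdot\Lambda'$ with $\Lambda':=\tfrac1\sigma\Gamma_1\times\sigma\Gamma_2$. Since $\phi(x,y)=e^{-a(x^2+y^2)}$ is radial, the map $f\mapsto f\circ R$ is an $L^2$-isometry that carries $V^2_a(\R^2)$ onto the shift-invariant space $W$ generated by the $R^{\top}\Z^2$-shifts of $\phi$, and it turns samples of $f$ on $\Lambda$ into samples of $f\circ R$ on $\Lambda'$. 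Hence $\Lambda$ is a sampling set for $V^2_a(\R^2)$ if and only if $\Lambda'$ is a sampling set for $W$, and it suffices to refute the latter.

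Next I would locate a convenient product subspace of $W$. A direct computation gives $R^{\top}(p,q)=(\sigma,0)$ and $R^{\top}(-q,p)=(0,\sigma)$; since $(p,q),(-q,p)\in\Z^2$, this shows $\sigma\Z^2\subseteq R^{\top}\Z^2$. Consequently every product $g=g_1\otimes g_2$ with $g_1,g_2$ in the space $V_{a,\sigma}(\R)$ of functions $\sum_n c_n e^{-a(x-\sigma n)^2}$ lies in $W$. Because failure of the lower sampling inequality for functions drawn from a subset already refutes sampling, it is enough to defeat $A\|g\|_2^2\le\sum_{\mu\in\Lambda'}|g(\mu)|^2$ using such products, for which the sample sum factorizes as $\big(\sum_{\gamma_1}|g_1(\gamma_1/\sigma)|^2\big)\big(\sum_{\gamma_2}|g_2(\sigma\gamma_2)|^2\big)$.

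Finally I would rescale each factor to the integer-shift normalization and invoke Theorem \ref{th_i3}. The dilation $h(y)=g_j(\sigma y)$ identifies $V_{a,\sigma}(\R)$ with $V^2_{a\sigma^2}(\R)$ and converts sampling the $\Gamma_2$-factor on $\sigma\Gamma_2$ into sampling $V^2_{a\sigma^2}(\R)$ on $\Gamma_2$, and sampling the $\Gamma_1$-factor on $\tfrac1\sigma\Gamma_1$ into sampling $V^2_{a\sigma^2}(\R)$ on $\tfrac1{\sigma^2}\Gamma_1$. For the second claim I set $\Gamma_2=\Z\setminus\{0\}$, so $D^-(\Gamma_2)=1$ and, by Theorem \ref{th_i3} (which is shape-independent), $\Z\setminus\{0\}$ is not a sampling set for $V^2_{a\sigma^2}(\R)$; thus there are $g_{2,k}\in V_{a,\sigma}(\R)$ of constant nonzero norm with $\sum_{\gamma_2}|g_{2,k}(\sigma\gamma_2)|^2\to0$. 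Tensoring with a fixed nonzero $g_1$ (whose factor $\sum_{\gamma_1}|g_1(\gamma_1/\sigma)|^2$ is finite by the Bessel bound for the separated set $\tfrac1\sigma\Gamma_1$) yields $g_k=g_1\otimes g_{2,k}$ of constant nonzero norm in $W$, hence, after applying $R$, a sequence in $V^2_a(\R^2)$ whose $\Lambda$-sample sums tend to $0$ while the norms stay bounded below, and this holds for \emph{every} separated $\Gamma_1$. For the first claim I symmetrically set $\Gamma_1=\sigma^2(\Z\setminus\{0\})$, so that $D^-(\Gamma_1)=1/\sigma^2$ and $\tfrac1{\sigma^2}\Gamma_1=\Z\setminus\{0\}$, and repeat the argument with the roles of the two factors exchanged.

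The technical heart of the argument is the rotation reduction of the first paragraph together with the containment $\sigma\Z^2\subseteq R^{\top}\Z^2$; once these are in place, the reduction to a genuinely one-dimensional, shape-independent obstruction is routine. The main point to handle with care is the bookkeeping of the three dilations (by $R$, by $1/\sigma$ and $\sigma$ in the two coordinates, and by $\sigma$ again in passing to integer shifts) so that the densities land exactly on the critical values $1/\sigma^2$ and $1$. I expect no genuine obstacle beyond this, precisely because exhibiting a failing sequence only requires working inside the tractable product subspace spanned by the $\sigma\Z^2$-shifts rather than controlling all of $W$.
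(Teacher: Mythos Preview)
Your proposal addresses only part (c) of the theorem; parts (a) and (b) are not touched. I will evaluate part (c).

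Your argument for part (c) is correct and takes a genuinely different route from the paper's. The paper chooses the critical sets $\Gamma_1=\sigma^2(\Z+\tfrac12)$ and $\Gamma_2=\Z+\tfrac12$ and produces an \emph{explicit} nonzero function in $V^\infty_a(\R^2)$ that vanishes on the whole of $\Lambda$: it starts from the theta-type function $g(x)=\sum_n(-1)^n e^{-a\sigma^2(x-n)^2}$, which vanishes on $\Z+\tfrac12$, and lifts it to two variables via the identity already used in Claim~\ref{claim_f_v},
\[
f(z,w)=g\Bigl(\tfrac{pz+qw}{\sigma^2}\Bigr)\exp\Bigl\{a\Bigl(\tfrac{pz+qw}{\sigma}\Bigr)^2-az^2-aw^2\Bigr\}\in V^\infty_a(\R^2).
\]
This shows the stronger conclusion that $\Lambda$ is not even a uniqueness set for $V^\infty_a(\R^2)$, and then non-sampling for $V^2_a$ follows from Proposition~\ref{prop_s1}.

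You instead pick $\Gamma_2=\Z\setminus\{0\}$ (resp.\ $\Gamma_1=\sigma^2(\Z\setminus\{0\})$), rotate by $R$ to transfer the problem to the $R^\top\Z^2$-shift-invariant space $W$, and exploit the sublattice inclusion $\sigma\Z^2\subset R^\top\Z^2$ to embed one-variable tensor products inside $W$. From there, the failure of the lower sampling inequality reduces cleanly to the one-dimensional counterexample of Theorem~\ref{th_i3}. This is a tidy conceptual reduction: once the inclusion $\sigma\Z^2\subset R^\top\Z^2$ is observed, no two-variable computation is needed. The price is that you obtain only a failing \emph{sequence} (enough for the statement) rather than a single vanishing function; the paper's route gives more information, and it reuses the analytic machinery developed for Theorem~\ref{main_result}, but requires the explicit theta identity and the lifting formula. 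Both approaches are valid; yours is arguably more elementary for part (c) in isolation.
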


Some remarks are in order. First, Theorem \ref{th_samp} holds in particular for $(p,q) \in \{(1,0),(0,1)\}$, which recovers \eqref{eq_i6}.
Second, Theorem \ref{th_samp} applies verbatim replacing the space $V^2_a(\R^2)$ by its $L^p$ variant
 \begin{align}\label{eq_vp}
 V^p_a(\mathbb{R}^2):=\Big\{f(x,y)=\sum_{(n,m)\in\mathbb Z^2}c_{n,m}e^{-a(x-n)^2-a(y-m)^2}:c\in l^p(\mathbb Z^2)\Big\},
 \end{align}
 $1 \leq p \leq \infty$,
 provided that the sampling inequalities \eqref{eq_i1} are considered with respect to $L^p$ and $\ell^p$ norms; see Section \ref{sec_p}.

\begin{figure}
 \begin{subfigure}{0.24\textwidth}
     \includegraphics[width=\textwidth]{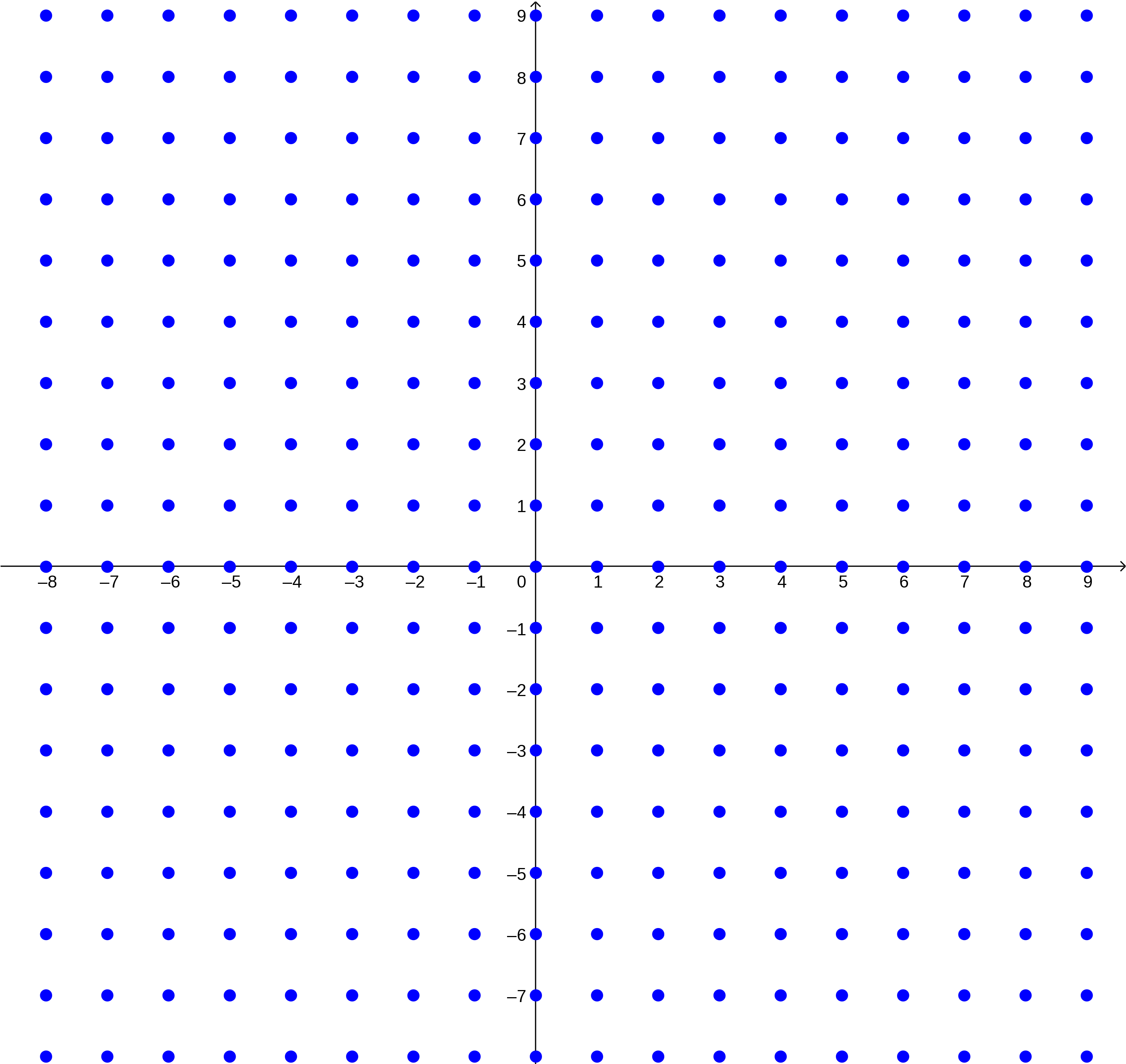}
     \caption{$p=0,\, q=1$}
     \label{fig:a}
 \end{subfigure}
 \hfill
 \begin{subfigure}{0.24\textwidth}
     \includegraphics[width=\textwidth]{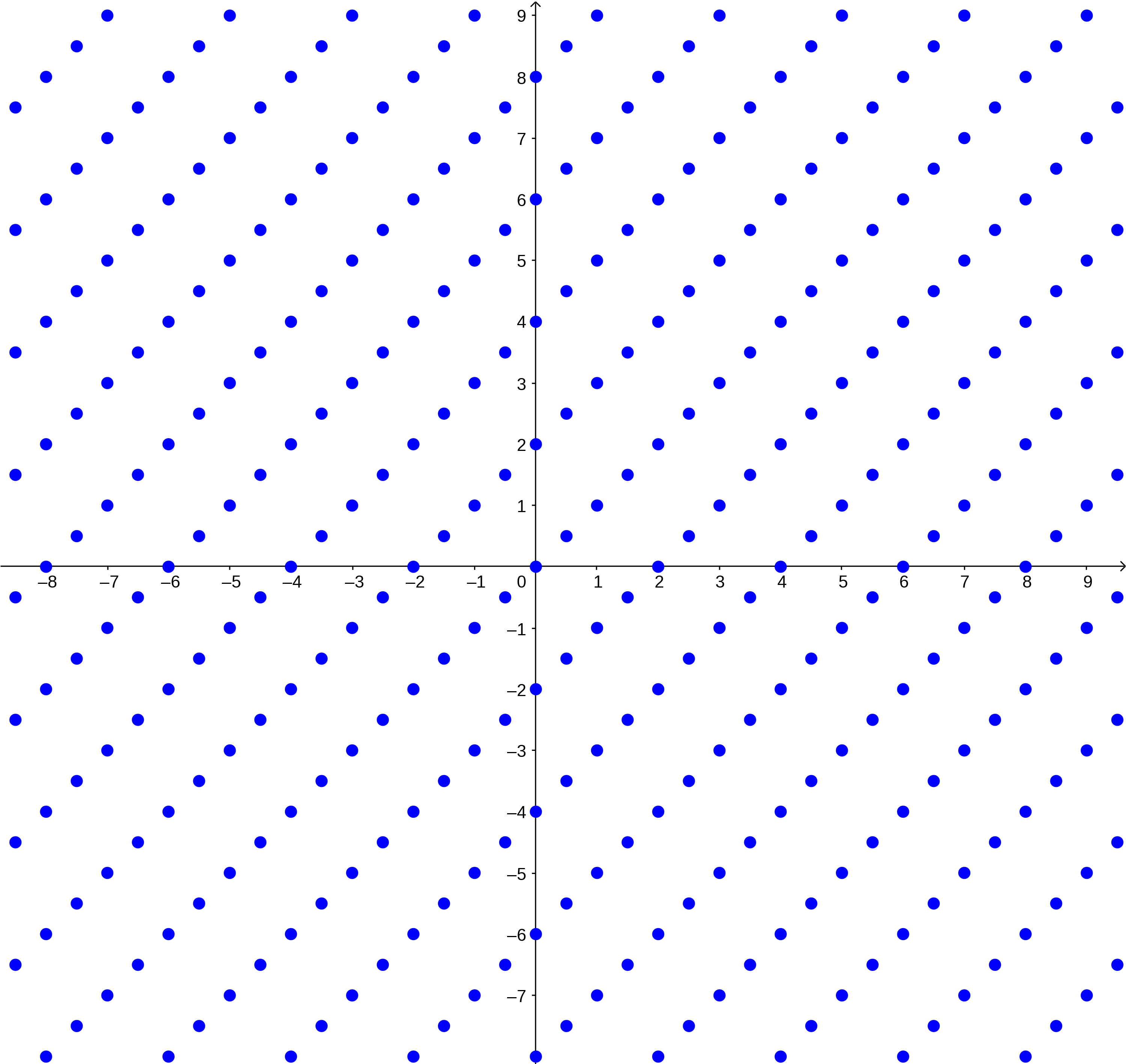}
     \caption{$p=1,\, q=1$}
     \label{fig:b}
 \end{subfigure}
  \hfill
 \begin{subfigure}{0.24\textwidth}
     \includegraphics[width=\textwidth]{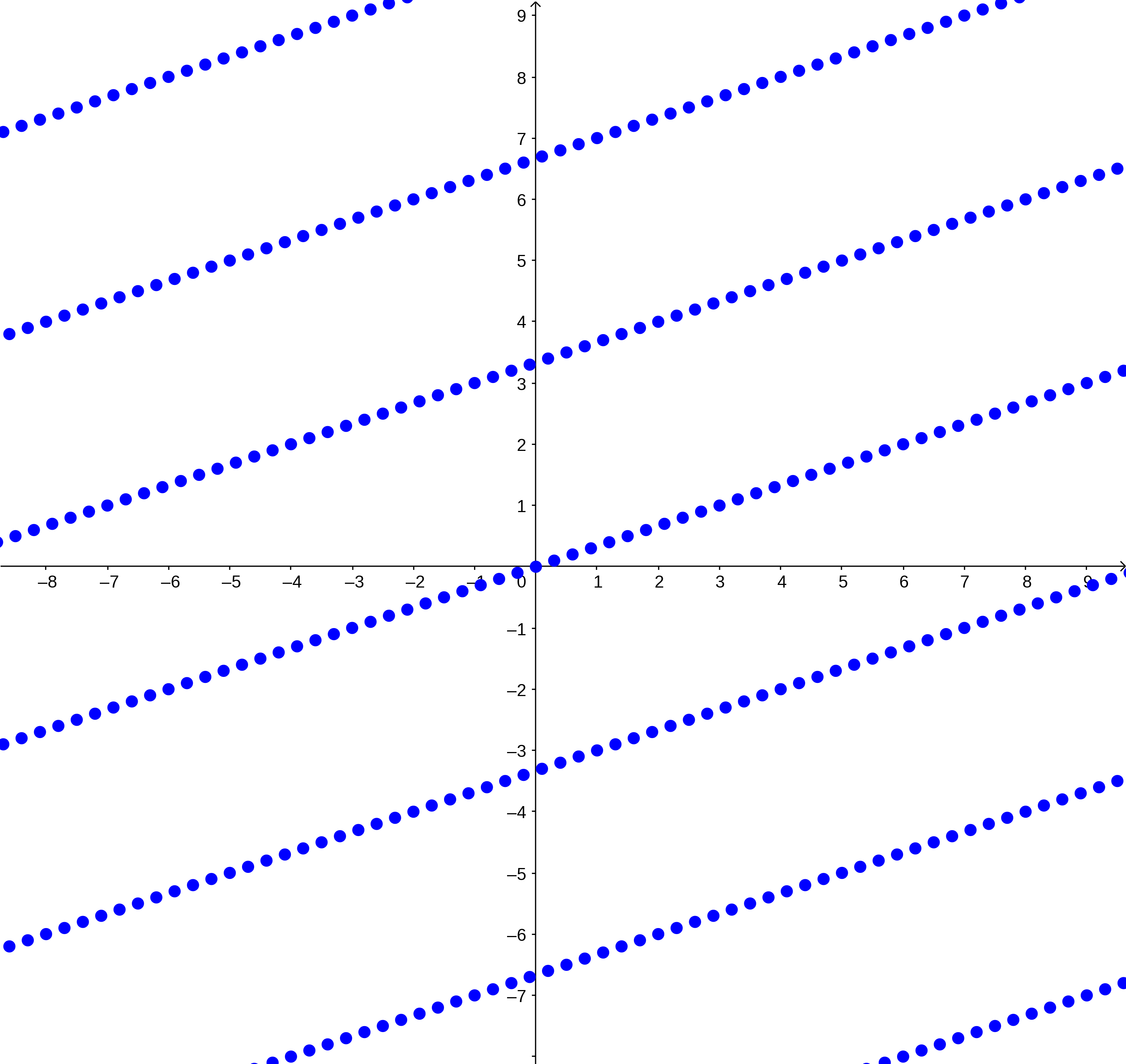}
     \caption{$p=1,\, q=3$}
     \label{fig:c}
 \end{subfigure}
 \hfill
 \begin{subfigure}{0.24\textwidth}
     \includegraphics[width=\textwidth]{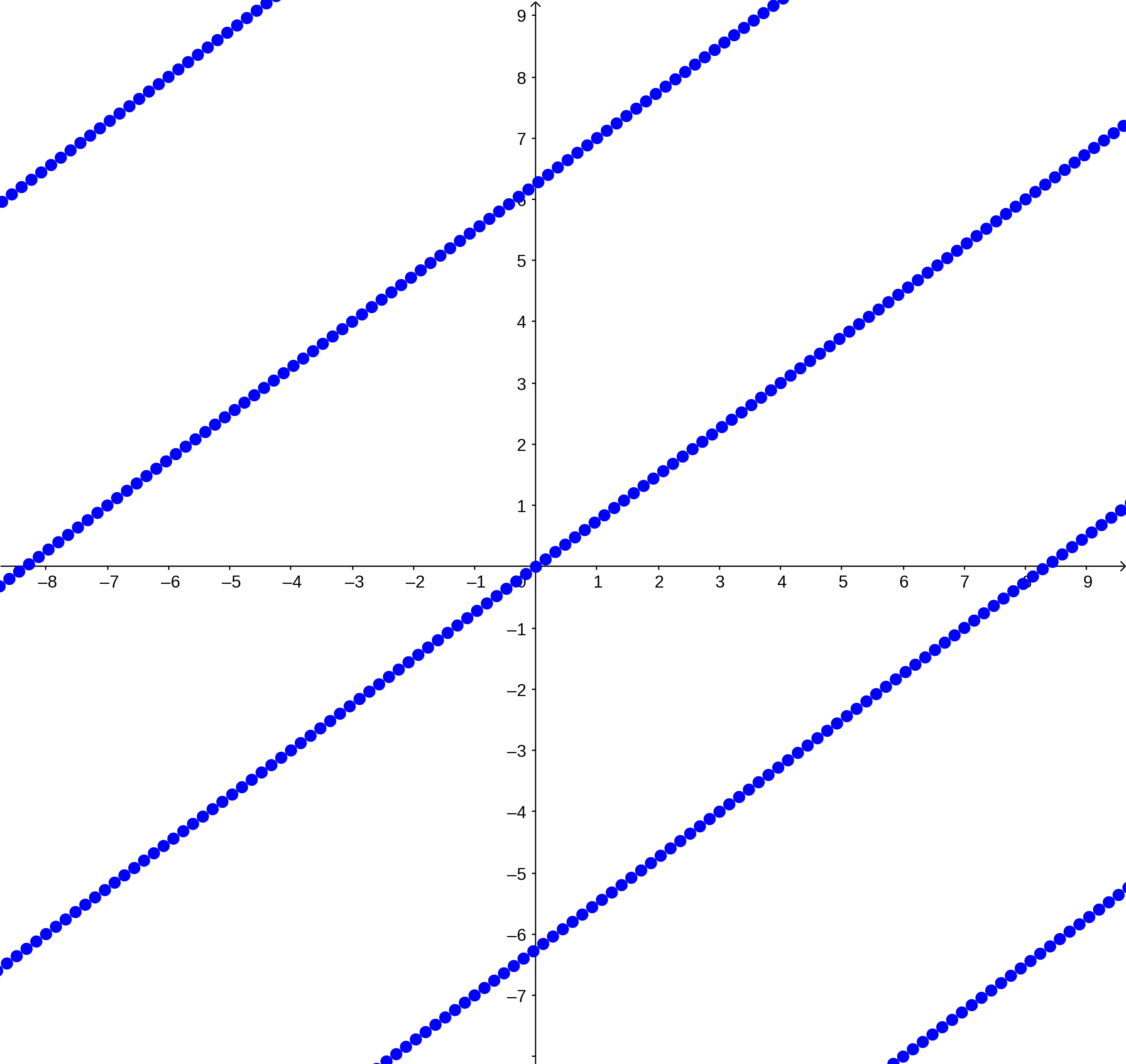}
     \caption{$p=3,\, q=4$}
     \label{fig:d}
 \end{subfigure}
  \caption{The lattice $\Lambda$ for various values $p,q$, and $D^{-}(\Gamma_i) \approx 1$.}
 \label{lattices_pic}
\end{figure}

Choosing in particular $\Gamma_j = \alpha_j \Z$ with $\alpha_j \in (0,1)$, Theorem \ref{th_samp} provides many examples of \emph{sampling lattices} for the space $V^2_a(\R^2)$.

By combining Theorem \ref{th_samp} with a factorization formula from \cite{MR1884237} the result can be extended to shift-invariant spaces generated by other functions besides the Gaussian, such as the tensor product of hyperbolic secants; see Section \ref{sec_other}.

The set \eqref{eq_lambdaset} lies at the intersection of two families of parallel lines, whose slopes depend on $p$ and $q$. Maintaining the densities of $\Gamma_1$ and $\Gamma_2$ fixed and above the value $1$ while increasing $p^2+q^2$ results in a sampling pattern where points are very densely spread along a rather sparse family of parallel lines; see Figure~\ref{lattices_pic}. This hints at a certain possible compensation between the density of the array of lines on the one hand, and the sampling density along each line on the other. To decouple these two factors, we next let the sampling density on each line tend to infinity, while keeping the density of inter-line separations constant, and investigate the sampling problem in this regime.

\subsection{Sampling trajectories}\label{sec_st}

Our second result concerns sampling along collections of parallel lines. In this case, we speak of a \emph{trajectory} $\Lambda \subset \mathbb{R}^2$ and call it a \emph{sampling trajectory} (ST) if there exist constants $A,B>0$ such that
\begin{align}\label{eq_ms}
A \| f \|_2^2 \leq \int_\Lambda |f(s)|^2 \, ds \leq B \|f\|_2^2, \qquad f \in V^2_a(\mathbb{R}^2),
\end{align}
where integration is with respect to arc-length. 

We parametrize families of parallel lines by a unit vector $\vv\in\R^2$ and a set $\Gamma \subset \R$, and define a trajectory $\Lambda=\Lambda(\Gamma)$ by 
\begin{equation}\label{la}
    \Lambda:=\{(x,y)\in\R^2: (x,y)\cdot\vv\in\Gamma\},
\end{equation}
where $(x,y)\cdot \vv$  is the usual dot-product in $\R^2$. 
\begin{figure}[!ht]
\centering
\includegraphics[height=6cm]{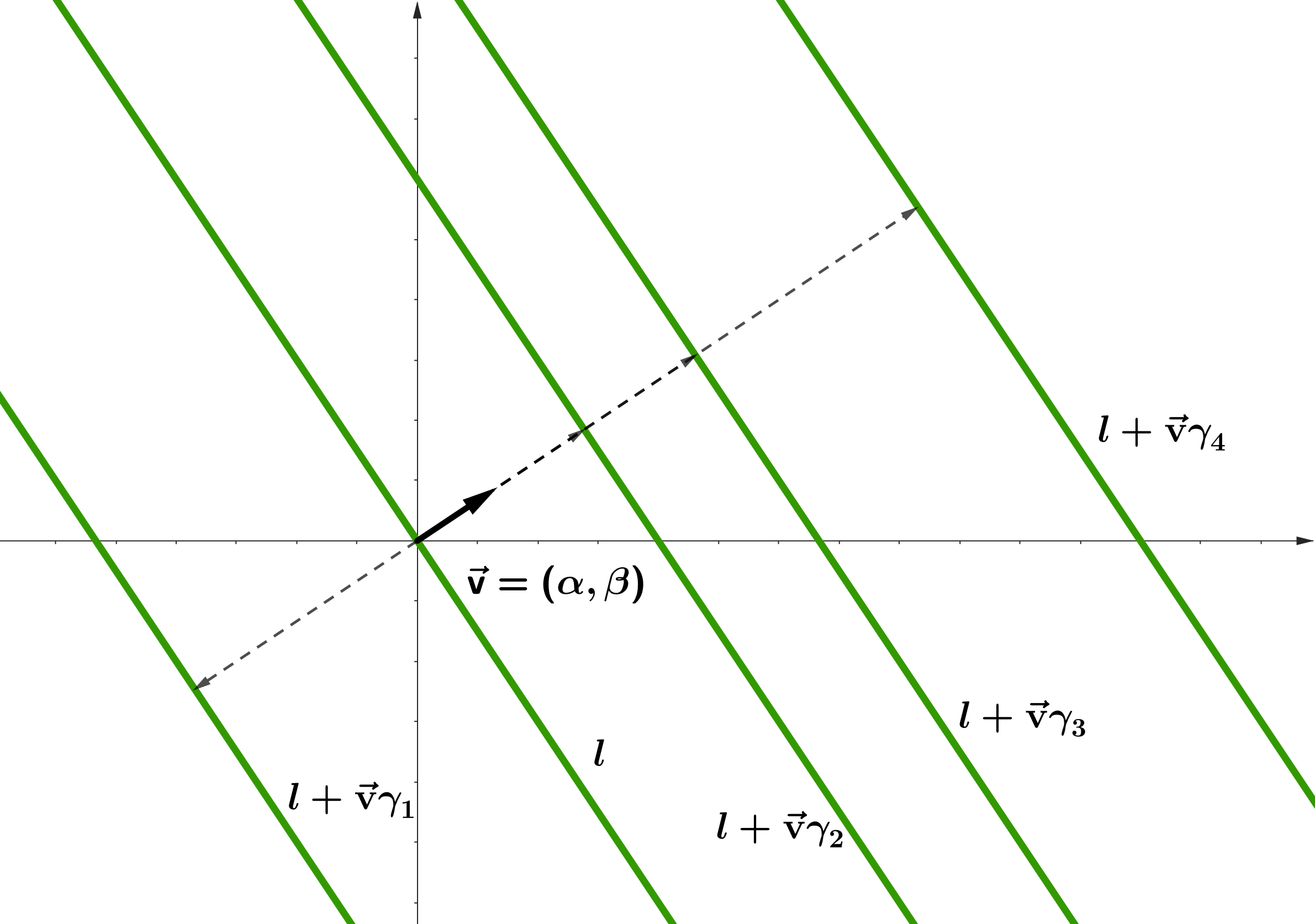}
\caption{Part of the set \eqref{la}; $\gamma_i \in\Gamma$ for $i=1\dots4$}
\label{img1}
\end{figure}
Part of the interest in sampling with lines, and other parametric families of curves, comes from applications where data are acquired by moving sensors \cite{u1,u2}. In that setting, the acquisition cost is better modeled by the density of the \emph{collection} of lines --- in our case parametrized by $\Gamma$ and $\vv$ ---as a proxy for scanning times \cite{u1,u2}, rather than by the density of a discrete sampling pattern contained in $\Lambda$. The challenge is then to characterize the sampling property \eqref{eq_ms} in terms of the intrinsic quality measure $D^-(\Gamma)$ \cite{g, Jaming, RUZ}.

Our main result on sampling trajectories reads as follows.
\begin{theorem}\label{main_result} Let $\Gamma\subset\R$ be a separated set, $\vv=(\alpha,\beta) \in \R^2$ a unit vector and $\La$ the collection of lines \eqref{la}. Set
\begin{align}\label{sigma}
\sigma(\vv)=\left\{ \begin{array}{ll}
\infty&  \mbox{if } \frac{\beta}{\alpha}\not\in\mathbb Q \\ \sqrt{p^2+q^2},& \mbox{if } \beta/\alpha=p/q\in\mathbb Q\cup\{\infty\}, \mbox{gcd}\,(p,q)=1.\end{array}\right.
\end{align}

 \begin{enumerate}
       \item[{\rm(a)}]
 If $\sigma(\vv)=\infty,$ then the condition
\begin{equation}\label{1}D^-(\Gamma) >1/ \sigma(\vv)\end{equation}
is both necessary and sufficient  for $\Lambda$ to be a ST for $V^2_a(\R^2)$.
    \item[{\rm(b)}]
 If $\sigma(\vv)<\infty,$ then \eqref{1} is sufficient for $\Lambda$ to be a ST for $V^2_a(\R^2)$, while 
\begin{equation}\label{1p}D^-(\Gamma) \geq 1/ \sigma(\vv)\end{equation}
 is necessary for $\Lambda$ to be a ST for $V^2_a(\R^2)$.
    \item[{\rm(c)}]
 If $\sigma(\vv)<\infty$ and 
\begin{align*}
D^+(\Gamma) := \limsup_{R\to\infty} \sup_{x\in \mathbb{R}} \tfrac{1}{R} \# \Gamma \cap [x-R/2,x+R/2]
<1/\sigma(\vv)
\end{align*}
then there is a non-trivial function $f\in V_a^1(\R^2)$ which vanishes on $\La$, and so $\La$ is not a ST for $V^2_a(\R^2)$.
\end{enumerate}
{\rm(}In every case, we interpret $1/\sigma(\vv)=0$ if $\sigma(\vv)=\infty$.{\rm)}
\end{theorem}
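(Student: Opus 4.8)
My plan is to reduce the two-dimensional trajectory problem to a family of one-dimensional sampling problems for the univariate Gaussian shift-invariant space, where Theorem \ref{th_i3} is available, and to isolate the irrational-slope case as the genuinely new phenomenon.

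\textbf{Common reductions.} Since the bivariate Gaussian is rotation invariant while $\Z^2$ is not, I would first apply the rotation $R$ carrying $\vv$ to $(1,0)$. This turns $f\in V^2_a(\R^2)$ into a function $F$ in the space generated by the shifts of the generator $g(x)=e^{-ax^2}$ along the rotated lattice $R^{-1}\Z^2$, and turns $\La$ into the family of vertical lines $\{s=\gamma:\gamma\in\Gamma\}$, so that $\int_\La|f|^2=\sum_{\gamma\in\Gamma}\int_\R|F(\gamma,t)|^2\,dt$. A direct computation shows that the center indexed by $(n,m)$ contributes a Gaussian in $s$ at the projection $n\alpha-m\beta$ and a Gaussian in $t$ at $n\beta+m\alpha$. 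The upper (Bessel) bound in \eqref{eq_ms} holds in every case and follows in a standard way from the separation of $\Gamma$ together with the Gaussian decay of the generators; the entire difficulty is the lower bound.

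\textbf{Rational case (parts (b) and (c)).} Writing $\alpha=q/\sigma$, $\beta=p/\sigma$, the map $(n,m)\mapsto(nq-mp,\,np+mq)$ is injective with image an index-$\sigma^2$ sublattice $L\subset\Z^2$, so both projections land in $\tfrac1\sigma\Z$. Grouping the expansion of $F(\gamma,\cdot)$ by its $t$-centers $\ell/\sigma$ and using that $\{g(t-\ell/\sigma)\}_{\ell}$ is a Riesz basis of its closed span, I get $\int_\R|F(\gamma,t)|^2\,dt\asymp\sum_\ell|E_\ell(\gamma)|^2$ with constants independent of $\gamma$, where each coefficient $E_\ell$ is a univariate Gaussian series whose $s$-centers, by the structure of $L$, are spaced exactly $\sigma$ apart. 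Summing in $\gamma$ and exchanging the order of summation gives $\int_\La|f|^2\asymp\sum_\ell\sum_{\gamma\in\Gamma}|E_\ell(\gamma)|^2$, which reduces \eqref{eq_ms} to the requirement that $\Gamma$ sample the univariate space with shift-lattice $\sigma\Z$, uniformly over the fibers. By the scaling $V^2_a(\R;\sigma\Z)\to V^2_{a\sigma^2}(\R;\Z)$ and Theorem \ref{th_i3}, this holds as soon as $D^-(\Gamma)>1/\sigma$ (sufficiency) and fails when $D^-(\Gamma)<1/\sigma$ (necessity). For part (c) the same decoupling shows that $F$ vanishes on $\La$ if and only if every fiber function $E_\ell$ vanishes on $\Gamma$; when $D^+(\Gamma)<1/\sigma$ the univariate interpolation counterpart of Theorem \ref{th_i3} produces a nontrivial $\ell^1$-sequence annihilated by $\Gamma$ on a single fiber, which I place in one fiber and set the others to zero to obtain the desired $f\in V^1_a(\R^2)$.

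\textbf{Irrational case (part (a)).} Here $1/\sigma(\vv)=0$ and the projections $\{n\alpha-m\beta\}$ and $\{n\beta+m\alpha\}$ are injective and dense, so no sublattice decoupling occurs and each line \emph{sees} every coefficient with a positive Gaussian weight. Necessity of $D^-(\Gamma)>0$ is the easy direction: if $D^-(\Gamma)=0$ there are arbitrarily long gaps in $\Gamma$, and concentrating a unit-norm test function in the middle of such a gap makes $\int_\La|f|^2$ arbitrarily small, contradicting the lower bound. The sufficiency—that \emph{any} separated $\Gamma$ with positive lower density is a sampling trajectory—is where I expect the main obstacle to lie, since there is no longer a one-dimensional model to fall back on. The route I would pursue is to approximate the irrational direction $\vv$ by rational directions $\vv_k\to\vv$ with $\sigma(\vv_k)\to\infty$, so that $D^-(\Gamma)>1/\sigma(\vv_k)$ eventually and part (b) yields a lower bound for the nearby line families; the crux is then to transfer this bound from $\La(\vv_k)$ to $\La(\vv)$ by a perturbation estimate controlling how the Gaussian slices vary with the slope, exploiting the uniform decay of the bivariate Gaussian and its derivatives. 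Establishing the robustness of the lower frame bound under this non-compact, slope-changing perturbation—while ensuring the constants do not degenerate as the per-fiber spacing $\sigma(\vv_k)$ diverges—is the technically delicate step, and I would expect it to require quantitative control of both the Bessel bound and the separation of the projected centers as $k\to\infty$.
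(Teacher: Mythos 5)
Your reduction for the rational case is sound in outline and close in spirit to the paper's congruence argument (the paper groups coefficients by the residue $pm-qn=k$ exactly as your fiber decomposition does, though it works in $V^\infty_a$ and passes through Beurling weak limits via Proposition \ref{prop_s3} rather than running a direct $L^2$ Riesz-basis decoupling; your route needs, and omits, the uniformity of the one-dimensional sampling constants over the finitely many fiber offsets $k/\sigma^2 \bmod 1$). But part (a), which you yourself flag as unresolved, is a genuine gap, and the route you sketch would not close it. Approximating $\vv$ by rational directions $\vv_k$ with $\sigma(\vv_k)\to\infty$ degenerates on both ends: the decoupling constants in your fiber argument are the Riesz-sequence bounds for Gaussians spaced $1/\sigma(\vv_k)$ apart, and these blow up as the spacing tends to $0$; and a change of slope is not a bounded perturbation of the line family, since lines of different slopes diverge linearly at infinity, so no perturbation estimate transfers the lower bound from $\La(\vv_k)$ to $\La(\vv)$. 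The paper's actual mechanism is entirely different and is the heart of the theorem: for irrational slope, a \emph{single} line is already a uniqueness set for $V^\infty_a(\R^2)$ (Theorem \ref{t_uniq}(i)). This is proved by analytically continuing $f$ restricted to the line, observing that the resulting exponential sum has pairwise distinct frequencies $\tau\gamma'-\tau m-n$ precisely because $\tau\notin\Q$, and invoking the uniqueness theorem for almost periodic functions. Combined with the weak-limit characterization of sampling trajectories (Proposition \ref{prop_s3}) and the trivial necessity from Lemma \ref{l2}, this gives part (a) for any separated $\Gamma$ with $D^-(\Gamma)>0$. Your proposal contains no substitute for this idea.

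There is a second, smaller gap in part (c). You invoke an ``interpolation counterpart of Theorem \ref{th_i3}'' to produce a nontrivial $\ell^1$ coefficient sequence annihilated on a fiber. Theorem \ref{th_i3} gives no such thing: failure of the sampling inequality for $D^-(\Gamma)<1$ does not yield a nonzero function vanishing on $\Gamma$, let alone one with $\ell^1$ coefficients, and the relevant hypothesis here is on $D^+$, not $D^-$. The paper proves exactly the needed statement as a separate result (Proposition \ref{prop_1d}): under the counting bound implied by $D^+(\Gamma/\sigma)<1$, an explicit infinite product $g_+g_-$ is built whose zero set is $\Gamma/\sigma$, and Cauchy--Hadamard plus Cauchy estimates on the associated Laurent series show that $e^{-az^2}g(z)$ lies in $V^1_a(\R)$. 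That construction (or an equivalent one) must be supplied; your one-fiber lifting of it to $V^1_a(\R^2)$ is then correct and matches the paper's use of the map in \eqref{proj_f}.
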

The space $V_a^1(\R^2)$ mentioned in part (c) of the theorem is defined in \eqref{eq_vp} and is a proper subset of $V_a^2(\R^2)$. As with Theorem \ref{th_samp}, with a suitable formulation of the sampling inequalities, the same claims hold with respect to $L^p$ norms, $1\leq p \leq\infty$; see Section \ref{sec_p}.

To compare, we quote the following result for the Paley-Wiener space, which is a consequence of \cite[Theorems 2 and 3]{RUZ}.
\begin{theorem}\label{th_ruz}
Let $\Omega \subset \R^2$ be compact, convex and with positive measure, $\Gamma\subset\R$ separated, $\vv \in \R^2$ a unit vector, and $\La$ the collection of lines \eqref{la}.
\begin{itemize}
\item[(a)] If $D^-(\Gamma) \vv \notin \Omega - \Omega$, then $\Lambda$ is a ST for $PW_\Omega(\R^2)$.
\item[(b)] If $D^-(\Gamma) \vv$ belongs to the interior of $\Omega - \Omega$, then $\Lambda$ is not a ST for $PW_\Omega(\R^2)$. 
\end{itemize}
\end{theorem}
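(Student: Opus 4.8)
Although the statement is quoted as a consequence of \cite{RUZ}, I would prove it by a self-contained reduction to the one-dimensional Landau--Beurling sampling theory, carried out fiber by fiber. After a rotation of $\R^2$ I may assume $\vv=(1,0)$, since the hypotheses on $\Gamma$ and on the symmetric body $\Omega-\Omega$ transform covariantly; then $\La$ is the union of the vertical lines $\{x=\gamma\}$, $\gamma\in\Gamma$, and for $f\in PW_\Omega(\R^2)$ the trajectory integral in \eqref{eq_ms} reads $\int_\La|f|^2=\sum_{\gamma\in\Gamma}\int_\R|f(\gamma,y)|^2\,dy$. Writing $\hat f(\xi_1,\xi_2)$ and letting $G(\cdot,\xi_2)$ be the inverse Fourier transform in the first variable of $\xi_1\mapsto\hat f(\xi_1,\xi_2)$, Plancherel in $y$ gives $\int_\R|f(\gamma,y)|^2\,dy=\int_\R|G(\gamma,\xi_2)|^2\,d\xi_2$, so that
\begin{align*}
\int_\La|f|^2=\int_\R\Big(\sum_{\gamma\in\Gamma}|G(\gamma,\xi_2)|^2\Big)\,d\xi_2,\qquad \|f\|_2^2=\int_\R\|\hat f(\cdot,\xi_2)\|_2^2\,d\xi_2.
\end{align*}

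By convexity of $\Omega$, for each $\xi_2$ the slice $\Omega_{\xi_2}=\{\xi_1:(\xi_1,\xi_2)\in\Omega\}$ is an interval of some length $\ell(\xi_2)$; thus $\xi_1\mapsto\hat f(\xi_1,\xi_2)$ lies in the Paley--Wiener space of that interval and $\gamma\mapsto G(\gamma,\xi_2)$ is exactly a function in $PW_{\Omega_{\xi_2}}(\R)$ sampled at $\gamma$, with $\|G(\cdot,\xi_2)\|_2=\|\hat f(\cdot,\xi_2)\|_2$. Consequently \eqref{eq_ms} is equivalent to sampling, with constants uniform in $\xi_2$, the one-dimensional spaces $PW_{\Omega_{\xi_2}}(\R)$ by the set $\Gamma$. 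The key geometric fact is that the maximal slice length $w:=\sup_{\xi_2}\ell(\xi_2)$ equals the width of $\Omega$ in the direction $\vv$, and since $\Omega-\Omega$ is a symmetric convex body with $0$ in its interior, $(t,0)\in\Omega-\Omega$ holds iff $|t|\le w$, while $(t,0)$ lies in the interior iff $|t|<w$. Hence the hypothesis of (a) reads $D^-(\Gamma)>w$ and that of (b) reads $D^-(\Gamma)<w$.

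For part (a) I would invoke Beurling's theorem \cite{MR0427956,MR2917231}: a separated set whose lower density exceeds the length of an interval samples the corresponding Paley--Wiener space. Since $D^-(\Gamma)>w$, the set $\Gamma$ samples $PW_{[0,w]}(\R)$ with some constant $A_0>0$; reducing a non-centered slice to $[0,\ell(\xi_2)]\subseteq[0,w]$ by a modulation---which preserves both sample energies and norms---places every fiber function inside $PW_{[0,w]}(\R)$, so the lower bound $A_0$ applies simultaneously on all fibers, while the upper bound is the uniform Plancherel--P\'olya/Bessel bound for the separated set $\Gamma$. Integrating these fiberwise inequalities in $\xi_2$ then yields \eqref{eq_ms}.

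For part (b), the inequality $D^-(\Gamma)<w$ together with the continuity of the concave function $\ell$ produces a set of $\xi_2$ of positive measure on which $\ell(\xi_2)>D^-(\Gamma)$; on each such fiber Landau's density theorem \cite{la67} forces the lower sampling inequality to fail, so there exist unit-norm $g_{\xi_2}\in PW_{\Omega_{\xi_2}}(\R)$ with arbitrarily small sampled energy. Assembling a field $\xi_2\mapsto g_{\xi_2}$ supported over this bad band into $\hat f(\xi_1,\xi_2)$ should give a nonzero $f\in PW_\Omega(\R^2)$ violating the lower bound of \eqref{eq_ms}. I expect this assembly to be the main obstacle: passing from a fiberwise existence statement to a genuine, measurable $L^2(\R^2)$ counterexample requires a quantitative and uniform form of Landau's obstruction---joint control of the concentration rate and of the supports over the fiber band---which is precisely the analytic content isolated in \cite{RUZ} and from which both (a) and (b) follow directly.
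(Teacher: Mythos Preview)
The paper does not give its own proof of this theorem: it is merely quoted from \cite{RUZ} for comparison with Theorem~\ref{main_result}. So there is nothing in the paper to compare your argument against, and your proposal is effectively an independent proof sketch.

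Your reduction for part~(a) is correct and complete. After the rotation, the fiberwise Plancherel identity you wrote is exact, each fiber function lies in $PW_{\Omega_{\xi_2}}(\R)$, and a modulation places it in $PW_{[0,w]}(\R)$ without changing norms or sample moduli. Since $D^-(\Gamma)>w$, Beurling's one-dimensional theorem gives a single pair of sampling constants valid on every fiber, and integrating in $\xi_2$ yields~\eqref{eq_ms}.

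For part~(b) your framework is right, but the ``assembly obstacle'' you flag is a phantom; no measurable selection or uniform Landau estimate is needed. The missing observation is purely geometric: the hypothesis forces $\Omega$ to contain an axis-aligned rectangle $I\times J$ with $|I|>D^-(\Gamma)$ and $|J|>0$. Indeed, since $(D^-(\Gamma),0)$ is interior to $\Omega-\Omega$ there is $t>D^-(\Gamma)$ with $(t,0)\in\Omega-\Omega$, i.e.\ points $p,q\in\Omega$ at the same height with $p-q=(t,0)$; replacing $p,q$ by $\lambda p+(1-\lambda)r$, $\lambda q+(1-\lambda)r$ for an interior point $r\in\Omega$ and $\lambda<1$ close to $1$ makes them interior while keeping $p-q=(\lambda t,0)$ with $\lambda t>D^-(\Gamma)$, and the convex hull of small balls around them contains the desired rectangle. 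Once you have $I\times J\subset\Omega$, take $g_n\in PW_I(\R)$ with $\|g_n\|_2=1$ and $\sum_{\gamma\in\Gamma}|g_n(\gamma)|^2\to0$ (Landau's necessary condition, since $D^-(\Gamma)<|I|$), fix any nonzero $h\in PW_J(\R)$, and set $f_n(x,y)=g_n(x)h(y)\in PW_{I\times J}(\R^2)\subset PW_\Omega(\R^2)$. Then $\|f_n\|_2=\|h\|_2$ while $\int_\Lambda|f_n|^2\,ds=\|h\|_2^2\sum_{\gamma}|g_n(\gamma)|^2\to0$, so $\Lambda$ fails to be a ST. This replaces the last paragraph of your proposal and makes the argument self-contained.
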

For lines with a rational slope, i.e., $\sigma(\vv)<\infty$,
Theorem \ref{main_result} is consistent with Theorem \ref{th_samp}. On the other hand, for families with an irrational slope, i.e., $\sigma(\vv)=\infty$, we see a surprising contrast between Theorems \ref{main_result} and
\ref{th_ruz}: while in the Paley-Wiener space the sampling property requires the average separation between parallel lines not to exceed a certain critical threshold, for the shift-invariant space \eqref{eq_i1} \emph{any} progression of distances between consecutive lines is allowed, as long as these remain bounded. This is all the more remarkable because in the one dimensional case, the density-sampling results in the Paley-Wiener and Gaussian shift-invariant space (Theorem \ref{th_i3}) are identical.

\subsection{Gabor frames}
The sampling problem in shift-invariant spaces is intimately connected to the spanning properties of \emph{Gabor systems}, which are functional dictionaries produced by translation and modulation of a function $g \in L^2(\R^2)$:
\begin{equation*}
\pi(t_1,t_2,\omega_1,\omega_2)g(x,y) = e^{2 \pi i x \omega_1} e^{2 \pi i y \omega_2} g(x - t_1, y - t_2), \qquad (t_1,t_2,\omega_1,\omega_2) \in \R^4.
\end{equation*}
The Gabor system generated by the \emph{window function} $g$ along a \emph{set of time-frequency nodes} $\Delta \subset \R^4$ is
\begin{align}
\mathcal{G}(g,\Delta) = \{ \pi(t_1,t_2,\omega_1,\omega_2)g : (t_1,t_2,\omega_1,\omega_2) \in \Delta\},
\end{align}
and the main question is whether it forms a  \emph{frame} of $L^2(\R^2)$, that is, whether there exist positive constants $A,B$ such that
\begin{equation}
     A\|f\|^2_{2} \le \sum\limits_{z \in \Delta} \left| \langle \pi(z)g, f \rangle \right|^2
     \le B \|f\|^2_2, \qquad f \in L^2(\R^2).
 \end{equation}
Gabor systems are much better understood in dimension one. The frame property is completely characterized for the univariate Gaussian window \cite{MR1188007,MR1173118}, whereas for other special windows, such as certain totally positive functions and certain rational functions, all sampling \emph{lattices} are classified in terms of their volume \cite{grs,MR3053565,MR1884237,MR4542702}.

Results in dimension two or more are scarcer. While time-frequency lattices leading to frames need to have volume $<1$, this condition is far from sufficient even for Gaussian windows; see \cite{GL} for various explicit counterexamples. On the other hand a tensor product argument provides numerous examples of \emph{complex} lattices with any desired volume satisfying the basic restriction $<1$ that provide Gabor frames for the standard Gaussian function
\begin{align}\label{gab_pi}
g_\pi(x,y) = e^{-\pi(x^2+y^2)}, \qquad (x,y) \in \R^2.
\end{align}
Here, we call a lattice $\Delta \subset \R^4$ complex, if, under the identification $\R^2 \times \R^2 \simeq \mathbb{C}^2$, it satisfies $\mathrm{i} \Delta = \Delta$. Under this condition, certain symmetries of the Bargmann-Fock space of analytic functions can be brought to bear on the Gabor frame problem, an approach that was championed in \cite{g_gabor}, where the question of comparable results for non-complex lattices was raised. In addition, lattices $\Lambda \subset \R^4$
with volume $<1/2$ lead to Gabor frames for Gaussian windows, provided
that they satisfy a certain genericity condition known as \emph{transcendentality} \cite{Luef_gabor}.

An example of a non-complex lattice yielding a Gabor frame for the bivariate Gaussian window \eqref{gab_pi} was found in \cite{PR_gabor}. The Gabor system $\mathcal{G}(g_{\pi}, \Delta)$ with lattice
\begin{align}\label{eq_lpr}
\Delta = \Z^2 \times \begin{bmatrix}
    a & a\\
    -b & b  
\end{bmatrix} \Z^2
\end{align}
 is a frame of $L^2(\R^2)$ if $0<a<1/2$ and $0<b<1/2$, while it fails to be so if $a>1/2$ and $b>1/2$ \cite{PR_gabor}. This example is genuinely different from the ones covered in \cite{g_gabor}: while an adequate symplectic transformation maps the lattice \eqref{eq_lpr} into a complex one, in order to preserve the frame property, the Gaussian function \eqref{gab_pi} needs be correspondingly transformed into a non-isotropic Gaussian function, which is out of the scope of \cite{g_gabor}.

As an application of our sampling results, we shall obtain the following.
\begin{theorem}\label{Gabor_thm}
    Let $\Lambda \subset \R^2$ admit the representation~\eqref{eq_lambdaset} with $\Gamma_j\subset \R$ separated and let $g_a(x,y)=e^{-a(x^2 +y^2)}$ with $a>0$. 
    \begin{enumerate}
        \item[(a)] If the densities of the sets $\Gamma_1$ and $\Gamma_2$ satisfy either~\eqref{eq_dens_1} or ~\eqref{eq_dens_2}, then the Gabor system
    $\mathcal{G}(g_a, \Lambda \times \Z^2)$ is a frame of $L^2(\R^2)$.  
\item[(b)] If $\mathcal{G}(g_a, \Lambda \times \Z^2)$ is a frame of $L^2(\R^2)$ then
\begin{equation}\label{eq_not_dens_2}
D^{-}(\Gamma_1) > \frac{1}{\sigma^2}, 
\quad D^{-}(\Gamma_2) > 1, \quad\mbox{and} \quad D^{-}(\Gamma_1) D^{-}(\Gamma_2) > 1.
\end{equation}
    \end{enumerate}
\end{theorem}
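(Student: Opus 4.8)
\section*{Proof proposal}

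The plan is to reduce the frame property of $\mathcal{G}(g_a,\Lambda\times\Z^2)$ to the sampling property of Theorem~\ref{th_samp} by a periodization argument in the modulation variable, exploiting that the frequency nodes form the integer lattice $\Z^2$. First I would fix a translation $\lambda\in\Lambda$ and sum over the modulations $k\in\Z^2$. Since $g_a$ is real and even, $\langle f,\pi(\lambda,k)g_a\rangle$ is the $k$-th Fourier coefficient of the $1$-periodization of $x\mapsto f(x)g_a(x-\lambda)$, so Parseval for Fourier series gives
\begin{equation*}
\sum_{k\in\Z^2}\big|\langle f,\pi(\lambda,k)g_a\rangle\big|^2=\int_{[0,1]^2}\Big|\sum_{n\in\Z^2}f(x+n)\,g_a(x+n-\lambda)\Big|^2\,dx.
\end{equation*}
For fixed $x$, the inner sum, viewed as a function of $\lambda$, is the shift $T_x h_x$ of the element $h_x:=\sum_{n}f(x+n)\,g_a(\cdot-n)\in V^2_a(\R^2)$, whose coefficient sequence is $c(x):=\{f(x+n)\}_{n}$. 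Summing over $\lambda\in\Lambda$ and integrating then yields
\begin{equation*}
\sum_{(\lambda,k)\in\Lambda\times\Z^2}\big|\langle f,\pi(\lambda,k)g_a\rangle\big|^2
=\int_{[0,1]^2}\sum_{s\in\Lambda-x}|h_x(s)|^2\,dx .
\end{equation*}
Since $f\mapsto(x\mapsto c(x))$ is an isometry of $L^2(\R^2)$ onto $L^2([0,1]^2;\ell^2(\Z^2))$ and the integer shifts of $g_a$ form a Riesz basis of $V^2_a(\R^2)$, one has $\|f\|_2^2\asymp\int_{[0,1]^2}\|h_x\|_2^2\,dx$ with constants depending only on $a$. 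Hence uniform sampling of the translated sets $\Lambda-x$ over $x\in[0,1]^2$ is equivalent to the frame property of $\mathcal{G}(g_a,\Lambda\times\Z^2)$.

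For part~(a), the key observation is that every translate retains the slanted structure \eqref{eq_lambdaset}. Writing $R$ for the (orthogonal) rotation matrix in \eqref{eq_lambdaset} and rotating $x$, one checks that $\Lambda-x=R\cdot\big(\tfrac{1}{\sigma}\Gamma_1'\times\sigma\Gamma_2'\big)$, where $\Gamma_1',\Gamma_2'$ are translates of $\Gamma_1,\Gamma_2$. Translation preserves separation and lower density, so $\Gamma_1',\Gamma_2'$ satisfy the same hypotheses \eqref{eq_dens_1} or \eqref{eq_dens_2} as $\Gamma_1,\Gamma_2$, with the same constants. Theorem~\ref{th_samp}(a) then shows each $\Lambda-x$ is a sampling set; since the sampling constants produced there depend only on the separation and lower-density parameters of $\Gamma_1,\Gamma_2$, which are uniform in $x$, the bounds are uniform over $x\in[0,1]^2$. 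By the equivalence above, $\mathcal{G}(g_a,\Lambda\times\Z^2)$ is a frame.

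For part~(b), the frame property first yields the integrated sampling inequality for the field $x\mapsto h_x$. Testing against fields of the form $c(x)=\mathbf 1_E(x)\,d$ concentrated near a point $x_0$, and using that $x\mapsto\sum_{t\in\Lambda}|h(t-x)|^2$ is continuous for every fixed $h\in V^2_a(\R^2)$ (a locally uniform sum of translates of a Schwartz function over a separated set), a Lebesgue differentiation argument upgrades this to: every translate $\Lambda-x_0$, $x_0\in[0,1]^2$, is a sampling set for $V^2_a(\R^2)$ with one fixed pair of constants. In particular $\Lambda=\Lambda-0$ is a sampling set, so Theorem~\ref{th_samp}(b) gives the non-strict inequalities \eqref{eq_not_dens}.

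The main obstacle is upgrading \eqref{eq_not_dens} to the strict inequalities \eqref{eq_not_dens_2}. The gain of strictness reflects that a frame requires \emph{all} translates $\Lambda-x$ to be sampling with a common lower bound, whereas Theorem~\ref{th_samp}(b) only uses that $\Lambda$ itself is sampling; at a critical density this uniformity must fail. Arguing by contradiction, if one of the three relations in \eqref{eq_not_dens} were an equality, the corresponding one-dimensional factor would sit at its critical density, and I would produce a sequence of translates along which the lower sampling constant tends to zero, so that $\{\Lambda-x\}$ cannot be uniformly sampling and the frame property is lost. Since $g_a$ is radial, the rotation $R$ is implemented by a metaplectic operator fixing $g_a$, which lets one transfer the critical factor to the integer-lattice setting, where the one-dimensional prototype is the failure of the von~Neumann lattice Gaussian Gabor system to be a frame---the vanishing of the Zak transform of the Gaussian at a single point forcing the degeneration. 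Carrying out this critical-density (Balian--Low/Zak) obstruction carefully for the slanted, possibly irregular configurations $\Gamma_1,\Gamma_2$ is where the real work lies.
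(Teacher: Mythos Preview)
Your Part~(a) is essentially the paper's argument: you rederive the equivalence between the frame property of $\mathcal{G}(g_a,\Lambda\times\Z^2)$ and uniform sampling of all translates $\Lambda-x$ in $V^2_a(\R^2)$ (the paper simply quotes this as Proposition~\ref{prop_gabor_sampling}, a version of Janssen's criterion), then observe that each translate again has the form~\eqref{eq_lambdaset} with shifted $\Gamma_j$'s and apply Theorem~\ref{th_samp}(a). The same comment applies to the non-strict half of Part~(b).

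The genuine gap is your upgrade from \eqref{eq_not_dens} to the strict inequalities \eqref{eq_not_dens_2}. Your plan is to show that at any critical density the family $\{\Lambda-x\}_x$ cannot be \emph{uniformly} sampling, by locating translates whose lower bound degenerates via a Balian--Low/Zak obstruction. But this is not a routine matter: for arbitrary separated $\Gamma_j$ sitting exactly at a critical density (e.g.\ $D^-(\Gamma_2)=1$), there is no reason a priori that some translate of the configuration must be bad---Theorem~\ref{th_samp}(c) only produces \emph{specific} critical examples, not a blanket failure. Your reduction via ``metaplectic rotation'' to the integer case does not obviously survive when $\Gamma_1,\Gamma_2$ are irregular, and you yourself flag this as ``where the real work lies.'' As written, this step is not a proof.

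The paper circumvents this difficulty entirely by a deformation argument: Gabor frames with smooth, rapidly decaying windows are stable under small perturbations of the time-frequency nodes (e.g.\ \cite{fk}), so if $\mathcal{G}(g_a,\Lambda\times\Z^2)$ is a frame then so is $\mathcal{G}(g_a,(r\Lambda)\times\Z^2)$ for some $r>1$. Applying the already-established non-strict bounds \eqref{eq_not_dens} to $r\Gamma_1,r\Gamma_2$ yields
\[
r^{-1}D^-(\Gamma_1)\ge\frac{1}{\sigma^2},\qquad r^{-1}D^-(\Gamma_2)\ge1,\qquad r^{-2}D^-(\Gamma_1)D^-(\Gamma_2)\ge1,
\]
which gives \eqref{eq_not_dens_2} immediately. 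This is both shorter and avoids any case analysis at the critical density; I would recommend replacing your Balian--Low sketch with this dilation trick.
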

As an application of Theorem \ref{Gabor_thm} we obtain the following corollary for 
anisotropic Gaussian windows \[g_{\alpha,\beta}(x,y) := e^{-(\alpha x^2+\beta y^2)}, \qquad x,y\in\R,\] with shape parameters $\alpha,\beta>0$ and time-frequency shifts given by lattices.

\begin{corollary}\label{coro_gabor}
Let $p,q \in \Z$ be relatively prime and $\sigma:=\sqrt{p^2+q^2}$.
Consider the time-frequency lattice
\begin{align*}
\Delta_{a,b,c,d} := \Bigg(
\begin{bmatrix} p/(a\sigma) & - q/(a\sigma)\\
q/(b\sigma) & p/(b\sigma)
\end{bmatrix} \cdot \tfrac{c}{\sigma} \Z \times d\sigma \Z \Bigg) \times a\Z \times b\Z,
\end{align*}
with $a,b,c,d >0$, and the anisotropic Gaussian function
with shape parameters $\alpha>0$ and $\beta:=\alpha \tfrac{b^2}{a^2}$.

If either
{\rm(}i{\rm)} $c<1$ and $d<1$, or {\rm(}ii{\rm)} $c<\sigma^2$ and $d < \sigma^{-2}$, then $\mathcal{G}\big(g_{\alpha, \beta}, \Delta_{a,b,c,d}\big)$ is a frame of $L^2(\R^2)$.
\end{corollary}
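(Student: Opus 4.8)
The plan is to obtain Corollary \ref{coro_gabor} from Theorem \ref{Gabor_thm} through a metaplectic change of variables that simultaneously converts an isotropic Gaussian window into the prescribed anisotropic one and reshapes the time-frequency lattice. First I would specialize Theorem \ref{Gabor_thm} to the arithmetic progressions $\Gamma_1=c\Z$ and $\Gamma_2=d\Z$. Since $D^-(t\Z)=1/t$ for every $t>0$, the two density hypotheses \eqref{eq_dens_1} and \eqref{eq_dens_2} translate respectively into $\{c<1,\ d<1\}$ and $\{c<\sigma^2,\ d<\sigma^{-2}\}$, i.e.\ precisely conditions (i) and (ii) of the corollary. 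Hence, for an arbitrary shape parameter $a_0>0$, Theorem \ref{Gabor_thm}(a) already asserts that $\mathcal{G}(g_{a_0},\Lambda\times\Z^2)$ is a frame of $L^2(\R^2)$, where $\Lambda=R\cdot\tfrac{c}{\sigma}\Z\times d\sigma\Z$ and $R$ denotes the rotation matrix in \eqref{eq_lambdaset}.

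Next I would introduce the diagonal symplectic map acting on $\R^2_{\mathrm{time}}\times\R^2_{\mathrm{freq}}$ by $(t,\omega)\mapsto(Lt,L^{-1}\omega)$ with $L=\mathrm{diag}(1/a,1/b)$; this is symplectic because $L$ is symmetric. Its metaplectic operator is the unitary dilation $Uf(x,y)=\sqrt{ab}\,f(ax,by)$, and a direct computation gives the intertwining relation $U\pi(t,\omega)U^{-1}=\pi(Lt,L^{-1}\omega)$. Consequently $U\mathcal{G}(g_{a_0},\Lambda\times\Z^2)=\mathcal{G}(Ug_{a_0},\,L\Lambda\times L^{-1}\Z^2)$, and since $U$ is unitary and the frame property is unaffected by positive scalar multiples of the window, it suffices to check that this transported system is exactly $\mathcal{G}(g_{\alpha,\beta},\Delta_{a,b,c,d})$. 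On the window side, $Ug_{a_0}(x,y)=\sqrt{ab}\,e^{-a_0(a^2x^2+b^2y^2)}$ is a positive multiple of $g_{\alpha,\beta}$ with $\alpha=a_0a^2$ and $\beta=a_0b^2$; choosing $a_0=\alpha/a^2$ realizes the prescribed $\alpha$ and forces $\beta=\alpha b^2/a^2$, matching the corollary. On the lattice side, $L^{-1}\Z^2=a\Z\times b\Z$ recovers the frequency part of $\Delta_{a,b,c,d}$, while $L\Lambda=\mathrm{diag}(1/a,1/b)\,R\cdot\bigl(\tfrac{c}{\sigma}\Z\times d\sigma\Z\bigr)$ recovers the time part, because multiplying $\mathrm{diag}(1/a,1/b)$ into $R$ produces exactly the matrix $\begin{bmatrix} p/(a\sigma) & -q/(a\sigma)\\ q/(b\sigma) & p/(b\sigma)\end{bmatrix}$.

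Combining these identifications yields $L\Lambda\times L^{-1}\Z^2=\Delta_{a,b,c,d}$ and $Ug_{a_0}=\sqrt{ab}\,g_{\alpha,\beta}$, so the frame property of $\mathcal{G}(g_{a_0},\Lambda\times\Z^2)$ transfers to $\mathcal{G}(g_{\alpha,\beta},\Delta_{a,b,c,d})$, completing the argument. I do not expect a genuine analytic obstacle here, since all the substantive work is done by Theorem \ref{Gabor_thm}; the only delicate point is bookkeeping, namely verifying that a single diagonal dilation can be tuned so that the time lattice, the frequency lattice, and the aspect ratio of the Gaussian all come out correct at once. The freedom to pick the shape parameter $a_0$ in Theorem \ref{Gabor_thm} is precisely what lets $\alpha$ remain a free parameter in the conclusion, and the symmetry of $L$ is what keeps the frequency scaling synchronized with the time scaling under the metaplectic correspondence.
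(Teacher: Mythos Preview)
Your proposal is correct and follows essentially the same route as the paper: specialize Theorem~\ref{Gabor_thm} to $\Gamma_j=c\Z,\,d\Z$ with shape parameter $a_0=\alpha/a^2$, then transport the resulting frame via the dilation $f(x,y)\mapsto f(ax,by)$, checking that this sends $g_{a_0}$ to a multiple of $g_{\alpha,\beta}$ and $\Lambda\times\Z^2$ to $\Delta_{a,b,c,d}$. The only difference is cosmetic---you phrase the dilation in metaplectic/symplectic language, while the paper simply calls it an isomorphism and appeals to a direct calculation.
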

In the isotropic case $a=b$, Corollary \ref{coro_gabor} provides a large collection of non-complex lattices yielding Gabor frames for the standard Gaussian window, which includes \eqref{eq_lpr}, and partially answers a question raised in \cite{g_gabor}. Theorem \ref{Gabor_thm} also complements \cite{Luef_gabor} by providing many examples of Gabor frames with \emph{non-transcendental (adjoint) lattices} (and, indeed, note that the lattice $\Delta_{a,b,c,d}$ in the theorem can have volume close to 1, while it is conjectured in \cite{Luef_gabor} that lattices with transcendental adjoints cannot have volume $>1/2$ is if they lead to Gabor frames.)

\subsection{Organization}
Section \ref{sec_p} provides definitions and background results.
In Sections \ref{sec_1} and \ref{sec_2} we study a number of \emph{uniqueness problems} for the shift-invariant space \eqref{eq_i2} and other related spaces, that is, we investigate which sets are contained in zero sets of non-zero functions. To this end, we study the restriction of bivariate functions in the shift-invariant space to lines, and aptly invoke one dimensional results together with arguments of congruence and almost periodicity. Theorems \ref{main_result}
and \ref{th_samp} are then proved in Sections \ref{sec_3} and \ref{sec_4} respectively by resorting to a compactness technique going back to Beurling. Theorem \ref{Gabor_thm} and Corollary \ref{coro_gabor} are proved in Section \ref{sec_5}, while Section \ref{sec_other} discusses extensions of the results to shift-invariant spaces generated by window functions closely related to the Gaussian. The proofs of certain auxiliary results are postponed to Section \ref{sec_pos}.

\section{Preliminaries}\label{sec_p}
For two functions $f,g:X \to [0,\infty)$ we write $f \lesssim g$ if there exists a constant $C\geq 0$ such that $f(x) \leq C g(x)$, for all $x \in X$. We also write $f \asymp g$ if $f \lesssim g$ and $g \lesssim f$. 

To prove sampling theorems, we will use a technique of Beurling that involves considering different $L^p$-norms and a certain compactness property. We review the necessary notions.

We denote the $L^p$-norm of a function $f:\R^d \to \mathbb{C}$ by $\|f\|_p$, while the $\ell^p$-norm of its restrictions to a discrete set $\Delta\subset\R^d$ is
$\|f|_\Delta\|_p^p:=\sum_{u \in\Delta}|f(u)|^p$, with the usual modifications for $p=\infty$. For the set of lines \eqref{la} we write
$$\|f|_\La\|_p^p:=\int_\La|f(u,v)|^p\,ds,$$ where integration is with respect to arc-length and $p<\infty$. We also write $\|f|_\La\|_\infty := \sup_{z\in\La} |f(z)|$, and will only do so for continuous functions $f$, so there are no concerns about null-measure sets.

We shall be interested in the shift-invariant spaces with Gaussian generator 
\begin{align}\label{eq_vpa2}
V^p_a(\R^d) = \Bigg\{ \sum_{n \in \Z^d} c_n g_a(\cdot-n): c \in \ell^p(\Z^d)\Bigg\},
\end{align}
where $d=1,2$, $1 \leq p \leq \infty$ and $g_a(x)=e^{-a|x|^2}$. The series in \eqref{eq_vpa2} converge in the $L^p$-norm for $p<\infty$ and in the $\sigma(L^\infty,L^1)$-topology for $p=\infty$; in addition, for all $1 \leq p \leq \infty$, the coefficients are unique, $\|f\|_p \asymp \|c\|_p$, and the series converges uniformly on compact sets \cite{ag01}. We say that $\Delta$ is a sampling set (SS) for $V^p_a(\R^d)$ if there exist positive constants $A,B$ such that
\begin{align*}
A \|f\|_p \leq \|f|_\Delta\|_p \leq B \|f\|_p, \qquad f \in V^p_a(\R^d),
\end{align*}
while $\Delta$ is a uniqueness set (US) for $V^p_a(\R^d)$ if the only function $f \in V^p_a(\R^d)$ that vanishes on $\Delta$ is the zero function. Similarly, we say that $\Lambda$, given by \eqref{la}, is a sampling trajectory (ST) for $V^p_a(\R^d)$ if there exist positive constants $A,B$ such that
\begin{align*}
A \|f\|_p \leq \|f|_\La\|_p \leq B \|f\|_p, \qquad f \in V^p_a(\R^d).
\end{align*}

The lower and upper Beurling densities of a set $\Delta \subset \R^d$ are
\begin{align}
D^-(\Delta) &= \liminf_{R\to\infty} \inf_{x\in \mathbb{R}} \tfrac{1}{R^d} \# \Delta \cap [x-R/2,x+R/2]^d,\\\label{dplus}
D^+(\Delta) &= \limsup_{R\to\infty} \sup_{x\in \mathbb{R}} \tfrac{1}{R^d} \# \Delta \cap [x-R/2,x+R/2]^d.
\end{align}
The condition $D^-(\Delta)\geq 1$ is necessary for $\Delta$ to be a SS for $V^p(\R^d)$. This follows from an adaptation of Landau's work on Paley-Wiener spaces \cite{la67}, for example in the abstract formulation of \cite{MR2224392,MR3742438}.

Recall that $\Delta \subset \R^d$
is called \emph{separated} if its separation constant 
\eqref{sep_const} is positive: $\delta(\Delta)>0$.
Let $\Delta_n\subset\R^d $ be a sequence of sets satisfying $\inf_n \delta(\Delta_n)>0$. We say that the sequence \emph{converges weakly} to a set $\Delta'\subset\R^d$ if  for every $\epsilon>0$ and $R>0$ there is an integer $l=l(\epsilon,R)$ such that
$$\Delta_k\cap(-R,R)^d\subset \Delta'+(-\epsilon, \epsilon)^d,\quad \Delta'\cap(-R,R)^d\subset \Delta_k+(-\epsilon, \epsilon)^d, \quad k\geq l. $$

For separated $\Delta \subset \R^d$ and arbitrary $\Sigma \subset \R^d$ we let $W_{\Sigma}(\Delta)$ denote the collection of all possible weak limits of the $\Sigma$-translates $\Delta+x_n, n\to\infty$, where $x_n\in\Sigma$.

We will use the following characterization of sampling sets, which goes back to Beurling's work on bandlimited functions \cite{MR10576141a}. For definitedness we quote the following special case of \cite[Theorem 3.1]{grs}. \footnote{The quoted result is formulated in dimension $d=1$ but the argument is valid for any $d$.}

\begin{proposition}\label{prop_s1}
Let $\Delta \subset \R^d$ be separated, $d=1,2$, and $a>0$.
Then the following are equivalent.
\begin{itemize}
\item[(a)] $\Delta$ is a sampling set for $V^p_a(\R^d)$ for some $p \in [1,\infty]$.
\item[(b)] $\Delta$ is a sampling set for $V^p_a(\R^d)$ for all $p \in [1,\infty]$.
\item[(c)] Every $\Delta' \in W_{\Z^d}(\Delta)$ is a sampling set for $V^\infty_a(\R^d)$.
\item[(d)] Every $\Delta' \in W_{\Z^d}(\Delta)$ is a uniqueness set for $V^\infty_a(\R^d)$.
\end{itemize}
\end{proposition}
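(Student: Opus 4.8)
The plan is to verify the cycle of implications $(b)\Rightarrow(a)\Rightarrow(c)\Rightarrow(d)\Rightarrow(b)$, where two of the four steps are immediate and the substance lies in (i) reducing every finite-$p$ statement to the endpoint $p=\infty$ and (ii) running Beurling's weak-limit argument at that endpoint. The two trivial steps are disposed of first: $(b)\Rightarrow(a)$ is tautological, and $(c)\Rightarrow(d)$ holds because a sampling set is automatically a uniqueness set --- the lower sampling inequality forces any $f\in V^\infty_a(\R^d)$ vanishing on $\Delta'$ to have $\|f\|_\infty=0$.

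Next I would establish the $p$-independence of the sampling property, which is what allows one to move freely between finite $p$ and $p=\infty$. The structural input, which I take from \cite{ag01} and the general theory of the Gaussian shift-invariant space, is that the synthesis map $c\mapsto\sum_{n}c_n g_a(\cdot-n)$ and the restriction map $f\mapsto f|_\Delta$ are bounded between matching $\ell^p$ and $L^p$ (or Wiener-amalgam) spaces \emph{uniformly} in $p$, because the associated Gram-type and sampling matrices enjoy off-diagonal decay and therefore lie in a Wiener/Sj\"ostrand algebra. A Wiener's-lemma argument then shows that bounded invertibility of the composed operator in one $\ell^p$ transfers to all $\ell^p$; together with the coefficient--norm equivalence $\|f\|_p\asymp\|c\|_p$, this yields that $\Delta$ is a sampling set for one $V^p_a(\R^d)$ if and only if it is one for all of them, and in particular for $V^\infty_a(\R^d)$. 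This settles $(a)\Leftrightarrow(b)$ and reduces the remaining content to the endpoint.

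The heart of the matter is the weak-limit characterization at $p=\infty$: sampling for $V^\infty_a(\R^d)$ is equivalent to every $\Z^d$-weak-limit in $W_{\Z^d}(\Delta)$ being a uniqueness set for $V^\infty_a(\R^d)$. For the forward direction (needed to finish $(a)\Rightarrow(c)$ after the reduction), I would use that $V^\infty_a(\R^d)$ is a normal family: bounded coefficients force, via the Gaussian decay of $g_a$, uniform bounds and equicontinuity on compacta, so a sampling set passes to a sampling set under weak limits with the same constants $A,B$, and hence each $\Delta'\in W_{\Z^d}(\Delta)$ is a uniqueness set, giving $(c)$. For the reverse direction $(d)\Rightarrow$ sampling at $p=\infty$, hence $(d)\Rightarrow(b)$, I would argue by contradiction in Beurling's style: assuming $\Delta$ fails to sample, choose $f_n\in V^\infty_a(\R^d)$ with $\|f_n\|_\infty=1$ and $\|f_n|_\Delta\|_\infty\to 0$, translate by integer vectors $k_n\in\Z^d$ so that a near-maximizer of $|f_n|$ lands in a fixed fundamental domain, extract a locally uniform limit $f\neq 0$ of the shifts $f_n(\cdot+k_n)$ and a weak limit $\Delta'$ of the sets $\Delta-k_n$, and conclude that $f$ vanishes on $\Delta'$, contradicting that $\Delta'$ is a uniqueness set.

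The main obstacle is precisely this last contradiction argument. One must verify that the near-maximizers can be brought into a fixed compact fundamental domain using \emph{only} $\Z^d$-translations --- which is exactly why the lattice-translate class $W_{\Z^d}(\Delta)$, rather than a finer translation family, is the correct object here, since $V^\infty_a(\R^d)$ is invariant under $\Z^d$-shifts but not under arbitrary ones. One must also ensure that the normalized shifts converge to a genuinely nonzero limit $f$ (this is where the normal-family compactness and the control on the maximizer's location enter), and, most delicately, that the smallness of $\|f_n|_\Delta\|_\infty$ survives passage to the weak limit: the sampled values of $f$ at points $\delta'\in\Delta'$ must be recovered as limits of the sampled values $f_n(\delta-k_n)$ at the nearby points $\delta-k_n\in\Delta-k_n$. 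Matching these requires combining the uniform equicontinuity from the normal-family property with the definition of weak convergence, which pairs each limit point of $\Delta'$ with asymptotically nearby points of the translated sets.
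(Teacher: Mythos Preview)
The paper does not give its own proof of this proposition: it is quoted as a special case of \cite[Theorem~3.1]{grs}, with no argument supplied in the text. Your sketch is correct and follows exactly the strategy of that reference --- Wiener's-lemma/Sj\"ostrand-algebra reasoning for the $p$-independence $(a)\Leftrightarrow(b)$, and Beurling's weak-limit compactness argument at $p=\infty$ for the equivalence with $(c)$ and $(d)$ --- so there is nothing to contrast on the level of method. One minor remark: in the step $(a)\Rightarrow(c)$ you phrase the conclusion as ``hence each $\Delta'\in W_{\Z^d}(\Delta)$ is a uniqueness set, giving $(c)$,'' but $(c)$ asks for the stronger conclusion that each $\Delta'$ is a \emph{sampling} set; your preceding sentence already establishes this (the constants $A,B$ pass to the weak limit), so the slip is only in the wording.
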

Similarly, we will use the following characterization of sampling trajectories.
\begin{proposition}\label{prop_s3}
Let $\Gamma \subset \R$ be separated and $a>0$. 
Let $\Lambda$ be the collection of lines with direction vector $\vv$ \eqref{la} and set 
\begin{equation}\label{sigma_set}
\Sigma := \vv \cdot \mathbb{Z}^2 = \{\vv\cdot(n,m): (n,m) \in \Z^2\} \subset \R.  
\end{equation}
Then the following are equivalent.
\begin{itemize}
\item[(a)] $\Lambda$ is a ST for $V^p_a(\R^2)$ for some $p \in [1,\infty]$.
\item[(b)] $\Lambda$ is a ST for $V^p_a(\R^2)$ for all $p \in [1,\infty]$.
\item[(c)] For every $\Gamma' \in W_{\Sigma}(\Gamma)$, the collection of lines
\begin{equation}\label{la2}
    \Lambda':=\{(x,y)\in\R^2: (x,y)\cdot\vv\in\Gamma'\}
\end{equation}
is a ST for $V^\infty_a(\R^2)$.
\item[(d)] For every $\Gamma' \in W_{\Sigma}(\Gamma)$, the collection of lines \eqref{la2} is a uniqueness set for $V^\infty_a(\R^2)$.
\end{itemize}
\end{proposition}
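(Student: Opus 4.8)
The plan is to mirror the proof of Proposition \ref{prop_s1}, which is Beurling's weak-limit scheme, while keeping careful track of the fact that here the sampling functional is a continuous arc-length integral over a moving family of lines rather than a sum over a discrete set. The organizing observation is an invariance: since $V^p_a(\R^2)$ is invariant under integer shifts $(x,y)\mapsto(x-n,y-m)$ with preservation of norm, and since $\La+(n,m)=\{(x,y):(x,y)\cdot\vv\in\Gamma+\vv\cdot(n,m)\}$, translating the trajectory \eqref{la} by an integer vector is the same as translating the generating set $\Gamma$ by an element of $\Sigma=\vv\cdot\Z^2$ (see \eqref{sigma_set}). Consequently the $\Z^2$-translates of $\La$ all sample $V^p_a(\R^2)$ with identical constants, and their weak limits are exactly the trajectories $\La'$ attached via \eqref{la2} to sets $\Gamma'\in W_\Sigma(\Gamma)$. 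This is precisely what reduces the relevant weak limits to $\Sigma$-translates of $\Gamma$.

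With this in place, I would establish the equivalences through the cycle $(b)\Rightarrow(a)\Rightarrow(d)\Rightarrow(c)\Rightarrow(b)$. The step $(b)\Rightarrow(a)$ is trivial. For $(a)\Rightarrow(d)$, note first that a ST is in particular a uniqueness set, so the content is that every weak-limit trajectory is a uniqueness set for $V^\infty_a(\R^2)$. I would argue by contradiction: if $f\in V^\infty_a(\R^2)$ is nonzero and vanishes on some $\La'$ attached to $\Gamma'\in W_\Sigma(\Gamma)$, then choosing $s_k\in\Sigma$ with $\Gamma+s_k\to\Gamma'$ weakly and truncating the bounded coefficient sequence of $f$, one obtains genuine $V^p_a(\R^2)$ functions which, after the corresponding integer translations, nearly vanish on $\La$ while keeping their norm bounded below; this contradicts the $L^p$ lower sampling bound of $(a)$. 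The estimates here rely on the slow variation of functions in $V^\infty_a(\R^2)$ along lines and on the norm equivalence $\|f\|_p\asymp\|c\|_p$ recorded in Section \ref{sec_p}.

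The heart of the argument is $(d)\Rightarrow(c)$, Beurling's compactness step, which I expect to be the main obstacle. Arguing by contradiction, if some $\La'$ fails the $L^\infty$ lower bound in \eqref{eq_ms}, there is a sequence $f_k\in V^\infty_a(\R^2)$ with $\|f_k\|_\infty=1$ and $\|f_k|_{\La'}\|_\infty\to 0$. Translating each $f_k$ by an integer vector so that its sup-norm is nearly attained near the origin forces a simultaneous translation of $\La'$ by an element of $\Sigma$; extracting a weak limit of the uniformly separated line configurations together with a locally uniform limit of the coefficient sequences, which are bounded in $\ell^\infty$, produces a nonzero $f\in V^\infty_a(\R^2)$ and a limiting trajectory $\La''$ attached to some $\Gamma''\in W_\Sigma(\Gamma)$ on which $f$ vanishes. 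Since $W_\Sigma(\Gamma)$ is stable under further $\Sigma$-translation and weak limits, $\La''$ is again admissible, and the vanishing of $f$ on it contradicts the uniqueness hypothesis $(d)$. The delicate points are that the normalization survives in the limit and that the arc-length sampling functional passes to the limit; for both I would discretize each line of the configuration into a fixed separated grid and use a one-dimensional sampling estimate along a single line to compare $\|f_k|_{\La'}\|_\infty$ with the values of $f_k$ on that grid, thereby reducing the continuous problem to the discrete weak-convergence machinery that underlies Proposition \ref{prop_s1}.

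Finally, $(c)\Rightarrow(b)$ converts $L^\infty$ sampling of all weak limits into $L^p$ sampling of $\La$ for every $p$. The upper bound in \eqref{eq_ms} is a general boundedness fact for arc-length integrals of $V^p_a(\R^2)$ functions over a separated family of lines, obtained from a Bernstein-type amalgam estimate combined with $\|f\|_p\asymp\|c\|_p$. For the lower bound I would again argue by contradiction and compactness: a sequence of functions nearly violating the $L^p$ lower bound, after integer translation and passage to a weak limit, yields a weak-limit trajectory failing the $L^\infty$ lower bound, contradicting $(c)$. The passage between $L^p$ and $L^\infty$ rests on the same norm equivalence and on the local reproducing properties of $V^p_a(\R^2)$, again transferred through the single-line discretization.
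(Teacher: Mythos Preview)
Your overall scheme is sound and the invariance observation is exactly the right anchor, but the paper organizes the argument rather differently. Instead of running a full Beurling cycle directly on the arc-length functional, the paper first proves a standalone discretization lemma (Lemma~\ref{lemma_s2}): $\La$ is a ST for $V^p_a(\R^2)$ if and only if some separated subset $\La^0\subset\La$ is a SS for $V^p_a(\R^2)$. This is done by a mean-value/oscillation estimate comparing the line integral to samples on a $\delta$-grid inside the lines. Once that lemma is in hand, $(a)\Leftrightarrow(b)$ follows immediately from Proposition~\ref{prop_s1} applied to $\La^0$, and the remaining implications $(b)\Rightarrow(c)\Rightarrow(d)\Rightarrow(a)$ are short: for $(b)\Rightarrow(c)$ one passes weak limits of the discrete SS $\La^0$ inside the weak-limit trajectory $\La'$; $(c)\Rightarrow(d)$ is trivial; and $(d)\Rightarrow(a)$ is the usual normalization/diagonal argument on coefficient sequences, with no truncation needed since one works in $p=\infty$ throughout. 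Your route instead threads the discretization through each implication separately and in particular runs the harder direction $(d)\Rightarrow(c)$ and the $L^\infty\to L^p$ transfer $(c)\Rightarrow(b)$ by ad hoc compactness; this is workable, but your step $(a)\Rightarrow(d)$ for a general $p$ via truncation of $\ell^\infty$ coefficients is the most delicate place (balancing the $R^{2/p}$ growth of the truncated norm against the smallness of the restriction requires a quantitative estimate you have not written down). The paper's packaging avoids that issue entirely by reducing to the discrete $p=\infty$ case up front.
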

Proposition \ref{prop_s3} follows from Proposition \ref{prop_s1} by a discretization argument, whose proof is postponed to Section \ref{sec_pos}.

\section{Uniqueness along parallel lines}\label{sec_1}
As a first step towards Theorem \ref{main_result}, we show that certain lines are uniqueness sets for $V^\infty_a(\R^2)$.
\begin{theorem}\label{t_uniq}
Let $\vv=(\alpha,\beta) \in \R^2$ be a unit vector and $\sigma(\vv)$ be given by \eqref{sigma}.
\begin{enumerate}
    \item[{\rm(i)}] If $\sigma(\vv)=\infty$ then every
    line $$\{(x,y)\in\R^2: (x,y)\cdot \vv=\gamma\},\quad \gamma\in\R,$$ is a US for $V^\infty_a(\R^2)$.
    \item[{\rm(ii)}] If  $\sigma(\vv)<\infty$, and $\Gamma \subset \R$ is separated and satisfies \eqref{1}, then the set $\La$ in \eqref{la} is a US for $V^\infty_a(\R^2)$.
\end{enumerate}
\end{theorem}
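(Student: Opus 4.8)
The plan is to reduce the bivariate uniqueness problem to the univariate result of Theorem \ref{th_i3}(b) by restricting a function $f \in V^\infty_a(\R^2)$ that vanishes on $\La$ to lines, and to exploit the product structure of the Gaussian $g_a(x,y)=e^{-a x^2}e^{-a y^2}$. Suppose $f=\sum_{(n,m)\in\Z^2} c_{n,m} g_a(\cdot-n,\cdot-m)$ with $c \in \ell^\infty(\Z^2)$ vanishes on the relevant line or family of lines. The essential point is that the restriction of a bivariate Gaussian shift to an affine line is, up to a fixed Gaussian factor and reparametrization by arc-length, a univariate Gaussian shift. Concretely, restricting to a line in direction orthogonal to $\vv=(\alpha,\beta)$ and collecting terms, I expect the restriction of $f$ to reorganize into a univariate element of some $V^\infty_a(\R)$-type space (after rescaling the shape parameter, which is harmless by the $a$-independence of Theorem \ref{th_i3}); vanishing of $f$ on a sufficiently dense line set then forces the univariate function to vanish identically, and hence the coefficients to vanish.

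For part (ii), where $\beta/\alpha=p/q$ is rational with $\gcd(p,q)=1$ and $\sigma=\sqrt{p^2+q^2}$, the key geometric fact is that the integer lattice $\Z^2$ projects onto the line $\R\vv$ as a \emph{uniform} arithmetic progression of step $1/\sigma$, since $\vv\cdot(n,m)=(\alpha n+\beta m)$ ranges over $\tfrac{1}{\sigma}\Z$ as $(n,m)$ runs over $\Z^2$ (using $\alpha=q/\sigma$, $\beta=p/\sigma$). Thus along the direction $\vv$ the shifts stack up at spacing $1/\sigma$, which is exactly why the critical one-dimensional density gets rescaled to $1/\sigma$. The plan is: first apply the rotation by the orthogonal matrix built from $(p,q)/\sigma$ so that $\vv$ becomes a coordinate direction; the rotated Gaussian is again the standard isotropic Gaussian (rotation-invariance of $g_a$), but the lattice $\Z^2$ is sheared. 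Grouping the shifts by their projection onto $\vv$, I would write $f$ restricted to each line in $\La$ as a univariate Gaussian shift-invariant series whose nodes lie in $\tfrac{1}{\sigma}\Z$, and whose coefficients are themselves one-dimensional Gaussian sums in the transverse variable. The condition $D^-(\Gamma)>1/\sigma$ translates, after rescaling by $\sigma$, into $D^-(\sigma\Gamma)>1$, placing us precisely at the sufficiency range of Theorem \ref{th_i3}(b) for the univariate space, forcing the transverse coefficient functions to vanish, and then a second univariate uniqueness argument in the transverse variable forces $c\equiv 0$.

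For part (i), where $\beta/\alpha$ is irrational, the projection $\vv\cdot\Z^2=\{\alpha n+\beta m\}$ is \emph{dense} in $\R$ rather than a discrete progression; this is the mechanism that makes a single line a uniqueness set. Here I would argue via almost periodicity: the restriction of $f$ to the line $\{(x,y)\cdot\vv=\gamma\}$ is, after the rotation normalizing $\vv$, an almost periodic function built from the densely distributed projected nodes, and vanishing on the whole line combined with the density of $\vv\cdot\Z^2$ should propagate to vanishing of all coefficients. The main obstacle I anticipate is precisely the bookkeeping in this irrational case: making rigorous the claim that a single line carries enough information, which is not a finite-density one-dimensional sampling statement but rather a Weyl-equidistribution / almost-periodicity phenomenon. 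I expect to need a careful argument showing that the transverse Gaussian profiles attached to distinct projected nodes are linearly independent in a quantitative, uniform sense, so that their superposition can vanish on a line only if each coefficient vanishes; controlling this uniformly over the dense (hence non-separated) set of projected nodes, while only assuming $c\in\ell^\infty$, is the delicate step, and is where I would invest most of the effort.
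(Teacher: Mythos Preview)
Your plan for part (ii) is essentially the paper's argument, organized in real variables rather than through analytic continuation. Two minor corrections: you want $D^-(\Gamma/\sigma)=\sigma D^-(\Gamma)>1$ (rescaling by $\sigma$ \emph{contracts} the set and multiplies the density by $\sigma$; your formula $D^-(\sigma\Gamma)>1$ has it backwards), and you mean part (a), not (b), of Theorem \ref{th_i3}. Also, the first reduction---restricting to a single line $\ell_\gamma$ and extracting the coefficients $d_k(\gamma)$---does not use Theorem \ref{th_i3} at all: the restriction vanishes identically on $\ell_\gamma$, so you only need uniqueness of coefficients in $V^\infty_{a/\sigma^2}(\R)$. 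Theorem \ref{th_i3} enters once, in the transverse direction, exactly as in the paper.

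For part (i), you have correctly located the obstacle but not its resolution. The paper's device is to extend $f$ holomorphically to $\Cc\times\Cc$ and restrict to the complexified line $z\mapsto(z,\tau z+\gamma')$ with $\tau=-\alpha/\beta\notin\Q$. After factoring out the common Gaussian $e^{-z^2(1+\tau^2)}$, the expansion $e^{-(z-n)^2-(\tau z+\gamma'-m)^2}$ leaves behind the weight $e^{-n^2-(\gamma'-m)^2}$ times a pure exponential $e^{-2zJ(n,m)}$ with $J(n,m)=\tau\gamma'-\tau m-n$. The point is that this Gaussian weight converts $c\in\ell^\infty$ into coefficients $\tilde c_J=c_{n,m}e^{-n^2-(\gamma'-m)^2}\in\ell^1$, and the irrationality of $\tau$ makes $(n,m)\mapsto J(n,m)$ injective. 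Evaluating at $z=iy$ one obtains a genuine almost periodic function with absolutely summable Fourier series, and the uniqueness theorem for such functions forces every $\tilde c_J$, hence every $c_{n,m}$, to vanish. This is precisely what removes the ``$\ell^\infty$ over a dense node set'' difficulty you flagged: the complex extension manufactures the missing $\ell^1$ decay.
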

\begin{proof}

The argument below does not depend on $a,$ so for simplicity we assume that $a=1.$ 

(i) Take any $\gamma \in \R$ and assume that $f\in V_a^\infty(\R^2)$ vanishes on the line 
$$
l = 
\{\beta y + \alpha x = \gamma, \, (x,y) \in \R^2\}.$$ We will show that $f = 0$.

Write $f(x,y)=\sum_{n,m\in\Z} c_{n,m} e^{-(x-n)^2 - (y-m)^2}$ with $c \in \ell^\infty(\Z^2)$,
and extend it analytically to $\Cc\times\Cc$ by
\begin{align*}
f(z,w)=\sum_{n,m\in\Z} c_{n,m} e^{-(z-n)^2 - (w-m)^2}
=\sum_{n,m\in\Z} c_{n,m} e^{-n^2-m^2} e^{2nz-z^2+2mw-w^2}.
\end{align*}
Since $c\in \ell^\infty(\Z^2)$ the series is easily seen to be uniformly and absolutely convergent on compact subsets of $\Cc\times\Cc$.

Set $\tau = -\alpha/\beta$ and $\gamma' = \gamma/ \beta$. Note that $\tau \notin \Q$, since $\sigma(\vv) = \infty$. 

The condition $f|_l=0$ is equivalent to the condition  $f(x,\tau x+\gamma')=0, x\in\R$. Since the function $f(z,\tau z+\gamma')$ is entire, it vanishes identically. This gives
\begin{align*}
0 &= f(z, \tau z + \gamma') =  \sum\limits_{n,m} c_{n,m}e^{-(z-n)^2 - (\tau z + \gamma' - m)^2}
\\
&=e^{-z^2(1+\tau^2)}\cdot \sum_{n,m} c_{n,m} e^{-n^2-(\gamma' - m)^2}e^{-2z(\tau \gamma'  - \tau m - n)}, \qquad z\in\Cc. 
\end{align*}

Set $J = J(n,m) = \tau \gamma'  - \tau m - n$. Since   $\tau \not \in \Q$, we have $J(n_1,m_1) \neq J(n,m)$ if $(n_1,m_1) \neq (n,m)$. Therefore, setting $\tilde c_{J} = \tilde c_{J(n,m)} = c_{n,m} e^{-n^2-(\gamma' - m)^2}$ we conclude that
$$
\sum_{J} \tilde c_J e^{-2z J} = 0, \quad z \in \Cc. 
$$Clearly, $\{\tilde c_{J}\} \in l^1(\Z^2)$.
When $z=iy, y\in\R,$  the series above is  the Fourier series of an almost periodic function (see e.g. \cite{bes} or \cite{levitan}). Since this series  is identically zero,  
by the uniqueness theorem for almost periodic functions, we conclude that  $\tilde{c}_J = 0$, whence $c_{n,m} = 0$ for every $(n,m) \in \Z^2$, and so $f = 0$.

(ii) Assume that a function $f\in V_a^\infty$ vanishes on $\Lambda$, and let us show that $f=0$. As before, we extend $f$ analytically to $\Cc\times\Cc$.

We write $\vv=(\alpha,\beta)=(p/\sqrt{p^2+q^2},q/\sqrt{p^2+q^2})$, with  $p,q\in\Z$ and $\mathrm{gcd}(p,q)=1$. Let us assume that $p,q\ne0.$
It is easy to check that the condition  $f|_\Lambda=0$ is equivalent to $$f\left(\frac{\sigma\gamma}{2p}-qz,\frac{\sigma\gamma}{2q}+pz \right)=0,\quad z\in\mathbb C, \gamma\in\Gamma.$$

Hence,
\begin{align*}
0&=f\left(\frac{\sigma\gamma}{2p}-qz,\frac{\sigma\gamma}{2q}+pz\right)
\\
&=\sum_{n,m}c_{n,m}\exp\left\{-\left(\frac{\sigma\gamma}{2p}-qz-n\right)^2-\left(\frac{\sigma\gamma}{2q}+pz-m\right)^2\right\}
\\
&=\exp\left\{-z^2\sigma^2+z\sigma\gamma(q/p-p/q)\right\}\times
\\
&\qquad\qquad\sum_{n,m}c_{n,m}\exp\left\{2z(pm-qn)-\left(\frac{\sigma\gamma}{2p}-n\right)^2-\left(\frac{\sigma\gamma}{2q}-m\right)^2\right\}.
\end{align*}
It follows that the last sum vanishes for all $z$ and $\gamma\in\Gamma.$

For every $k\in\mathbb Z$, choose any $n_0=n_0(k),m_0=m_0(k)$ satisfying 
$pm_0-qn_0=k.$
Then $pm-qn=k$ iff $n=n_0+pl,m=m_0+ql, l\in \mathbb Z,$ and we have
$$\sum_{k=-\infty}^\infty e^{2kz}\sum_{l=-\infty}^\infty c_{n_0+pl,m_0+ql} e^{-(\frac{\sigma\gamma}{2p}-n_0-pl)^2-(\frac{\sigma\gamma}{2q}-m_0-ql)^2}=0,$$
for every $z$ and $\gamma\in\Gamma.$ This gives
\begin{equation}\label{2}\sum_{l=-\infty}^\infty c_{n_0+pl,m_0+ql} e^{-(\frac{\sigma\gamma}{2p}-n_0-pl)^2-(\frac{\sigma\gamma}{2q}-m_0-ql)^2}=0,\end{equation}
for every $n_0,m_0$ and every $\gamma\in\Gamma.$

We note that
\begin{align*}
&(\frac{\sigma\gamma}{2p}-n_0-pl)^2+(\frac{\sigma\gamma}{2q}-m_0-ql)^2
\\
&\quad=
(\frac{\sigma\gamma}{2p}-n_0)^2+(\frac{\sigma\gamma}{2q}-m_0)^2-2l\sigma(\gamma-\frac{n_0p+m_0q}{\sigma})+l^2\sigma^2
\\
&\quad=
(\frac{\sigma\gamma}{2p}-n_0)^2+(\frac{\sigma\gamma}{2q}-m_0)^2-(\gamma-\frac{n_0p+m_0q}{\sigma})^2+\sigma^2((\gamma/\sigma-\frac{n_0p+m_0q}{\sigma^2})- l)^2.
\end{align*}
 Since the first three terms do not depend on $l,$ by \eqref{2} we see that  the function 
$$g_k(\zeta):=\sum_{l=-\infty}^\infty a_l e^{-\sigma^2(\zeta-  l)^2},\quad a_l:=c_{n_0+pl,m_0+ql},$$
vanishes on the set $ \zeta\in \Gamma/\sigma-\frac{n_0p+m_0q}{\sigma^2}.$

By \eqref{1}, for any constant $C$, we have $D^-(\Gamma/\sigma-C)=\sigma D^-(\Gamma)>1$.
By Theorem \ref{th_i3} and Proposition \ref{prop_s1}, the function $g_k$ above is zero for every $k$ (i.e. for every $n_0,m_0$) and so all coefficients $c_{n,m}=0$ and $f=0.$

If one of the numbers $p,q$ is equal to zero, the result can be proved similarly (and somewhat more easily).
\end{proof}
\begin{remark}
Part (ii) of Theorem \ref{t_uniq} holds also for non-separated $\Gamma$, as can be seen by invoking \cite[Theorem 4.4, part (a)]{grs} instead of Theorem \ref{th_i3} in the previous proof.
\end{remark}

\section{Non-uniqueness along Parallel Lines}\label{sec_2}
We start by observing that sampling sets must have positive densities.
\begin{lemma}\label{l2}
Assume that $D^-(\Gamma)=0$, let $\vv \in \R^2$ be a unit vector and let $\Lambda$ be the collection of parallel lines with direction vector $\vv$ given by \eqref{la}. Then for any $p, 1 \le p \leq \infty,$ $\Lambda$ is not a ST for $V^p_a(\R^2)$. 
\end{lemma}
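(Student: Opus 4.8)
The plan is to contradict the lower sampling inequality in \eqref{eq_ms} (and its $L^p$ analogue) by producing functions in $V^p_a(\R^2)$ that concentrate in a region of the plane lying far from every line of $\La$. The mechanism is that $D^-(\Gamma)=0$ forces $\Gamma$ to have arbitrarily long gaps, which correspond to strips of $\R^2$ of arbitrarily large width that are disjoint from $\La$. We may assume $\Gamma$ is uniformly locally finite, i.e.\ $\sup_s \#\big(\Gamma\cap[s,s+1]\big)<\infty$, for otherwise the \emph{upper} inequality in \eqref{eq_ms} already fails (e.g.\ with $f=g_a$) and $\La$ is not a ST.

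First I would extract the gaps. Since $D^-(\Gamma)=0$, choose $R_k\to\infty$ and $x_k\in\R$ with $\tfrac{1}{R_k}\#\big(\Gamma\cap[x_k-R_k/2,x_k+R_k/2]\big)=:\varepsilon_k\to0$. The at most $\varepsilon_k R_k$ points of $\Gamma$ inside this interval cut it into at most $\varepsilon_k R_k+1$ pieces, so the longest piece has length at least $R_k/(\varepsilon_k R_k+1)=1/(\varepsilon_k+1/R_k)\to\infty$. Hence there are open intervals $(\gamma_k^-,\gamma_k^+)$, disjoint from $\Gamma$, with $L_k:=\gamma_k^+-\gamma_k^-\to\infty$. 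Because $|\vv|=1$, the Euclidean distance from a point $z\in\R^2$ to the line $\{w:w\cdot\vv=\gamma\}$ equals $|z\cdot\vv-\gamma|$. The group $\Sigma=\vv\cdot\Z^2$ is either dense in $\R$ or cyclic, so once $L_k$ exceeds its spacing I may pick $u_k\in\Z^2$ with $u_k\cdot\vv$ in the central third of $(\gamma_k^-,\gamma_k^+)$; then $\rho_k:=\mathrm{dist}(u_k\cdot\vv,\Gamma)\ge L_k/3\to\infty$. Set $f_k=g_a(\cdot-u_k)\in V^p_a(\R^2)$, whose coefficient sequence is a single unit mass, so $\|f_k\|_p\asymp1$ for every $p$.

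Second, I would estimate $\|f_k|_\La\|_p$. On the line at level $\gamma$, whose perpendicular distance to $u_k$ is $d_\gamma=|u_k\cdot\vv-\gamma|\ge\rho_k$, we have $|f_k|^p=e^{-ap(d_\gamma^2+t^2)}$ in the arc-length parameter $t$, so $\int_{\mathrm{line}}|f_k|^p\,ds=\sqrt{\pi/(ap)}\,e^{-ap d_\gamma^2}$ and, using $d_\gamma^2\ge\tfrac12\rho_k^2+\tfrac12 d_\gamma^2$,
\[
\|f_k|_\La\|_p^p=\sqrt{\tfrac{\pi}{ap}}\sum_{\gamma\in\Gamma}e^{-ap\,d_\gamma^2}\le e^{-ap\rho_k^2/2}\cdot\sqrt{\tfrac{\pi}{ap}}\sum_{\gamma\in\Gamma}e^{-ap\,d_\gamma^2/2}.
\]
Under the uniform local finiteness assumed above, the last sum is bounded uniformly in $k$, whence $\|f_k|_\La\|_p\lesssim e^{-a\rho_k^2/2}\to0$ while $\|f_k\|_p\asymp1$, contradicting the lower bound. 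For $p=\infty$ no sum is needed: $\|f_k|_\La\|_\infty=\sup_\gamma e^{-a d_\gamma^2}=e^{-a\rho_k^2}$.

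The only delicate point is controlling the Gaussian tail sum over $\Gamma$, and this is exactly what uniform local finiteness (automatic when $\Gamma$ is separated, and forced by the upper inequality otherwise) takes care of; the rest is elementary. When $\Gamma$ is separated one can sidestep this computation entirely via Proposition~\ref{prop_s3}: translating the midpoints of the growing gaps into $(-\varepsilon,\varepsilon)$ along $\Sigma$ produces the empty set $\Gamma'=\emptyset$ as a weak limit, so the corresponding $\La'=\emptyset$ is not a uniqueness set and part (d) of that proposition shows $\La$ is not a ST.
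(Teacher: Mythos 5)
Your argument is correct, and your main route is genuinely different from the paper's. The paper disposes of this lemma in two lines by invoking the weak-limit machinery of Proposition~\ref{prop_s3}: since $D^-(\Gamma)=0$ and $\Sigma=\vv\cdot\Z^2$ is relatively dense, translating the centers of the arbitrarily long gaps of $\Gamma$ yields $\emptyset\in W_\Sigma(\Gamma)$, whence $\Lambda'=\emptyset$ is not a uniqueness set and part (d) of that proposition applies --- exactly the shortcut you record in your final sentence. Your primary argument instead constructs explicit test functions $g_a(\cdot-u_k)$ centered in the wide empty strips and verifies the failure of the lower sampling inequality by a direct Gaussian tail estimate. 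What this buys: it is self-contained (no compactness or $p$-independence transfer via Proposition~\ref{prop_s3}/Lemma~\ref{lemma_s2} is needed, since you treat each $p$ directly), and it covers non-separated $\Gamma$, which the lemma's statement formally allows but the paper's proof silently excludes by routing through Proposition~\ref{prop_s3}. What it costs is length, plus the extra reduction to uniform local finiteness. Two small imprecisions, neither fatal: the claim that the upper inequality fails ``e.g.\ with $f=g_a$'' when $\Gamma$ is not uniformly locally finite is only correct after replacing $g_a$ by integer translates $g_a(\cdot-u_k)$ with $u_k\cdot\vv$ near the accumulation points (the fixed function $g_a$ sees nothing if the accumulation occurs far from the origin); and the existence of $u_k\in\Z^2$ with $u_k\cdot\vv$ in the central third of the gap deserves the one-line remark that $\Sigma$ equals $\tfrac1\sigma\Z$ or is dense, so it meets every interval of length exceeding $1$. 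With those touches your proof is complete.
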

\begin{proof}
We invoke Proposition \ref{prop_s3}.
If $D^-(\Gamma)=0$, then $\Gamma' :=\emptyset \in W_\Sigma(\Gamma)$, where $\Sigma$ was defined in~\eqref{sigma_set}. The corresponding set $\Lambda'$ is also empty, and thus not a US for $V^\infty_a(\R^2)$.
\end{proof}

Second, we provide an explicit condition for non-uniqueness in the univariate shift-invariant space \eqref{eq_i2p}, which may be of independent interest, and is closely related to the decomposition of $V^2_a(\R)$ in terms of so-called small Fock spaces \cite{MR4456797}.

\begin{proposition}\label{prop_1d}
Let $a>0$ and $\Gamma\subset\mathbb R$ be a separated set. Assume that 
\begin{equation}\label{gp}\max\{\#\Gamma\cap[0,r),\#\Gamma\cap(-r,0)\}\leq \rho r+K, \quad r>0,\end{equation}for some $K>0$ and $0<\rho<1$. Then there is a non-trivial a function $f \in V^1_a(\R)$
such that \[\Gamma=\{x \in \mathbb{R}: f(x)=0\}.\]
\end{proposition}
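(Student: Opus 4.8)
The plan is to realize $V^1_a(\R)$ as a space of Laurent series and to build $f$ from an explicit infinite product. Writing the entire extension $f(z)=\sum_n c_n e^{-a(z-n)^2}=e^{-az^2}\sum_n c_n e^{-an^2}e^{2anz}$ and substituting $w=e^{2az}$, we obtain $f(z)=e^{-az^2}G(e^{2az})$, where $G(w)=\sum_n d_n w^n$ and $d_n=c_n e^{-an^2}$. The map $x\mapsto e^{2ax}$ is an increasing bijection from $\R$ onto $(0,\infty)$, and $e^{-ax^2}>0$, so the real zero set of $f$ equals $\Gamma$ precisely when the zeros of $G$ in $(0,\infty)$ are exactly $\{e^{2a\gamma}:\gamma\in\Gamma\}$. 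Moreover the membership condition $c\in\ell^1(\Z)$ reads $\sum_n |d_n|e^{an^2}<\infty$. Thus it suffices to produce a function $G$, holomorphic on $\Cc^*=\Cc\setminus\{0\}$, with these prescribed zeros and with Laurent coefficients satisfying $\sum_n|d_n|e^{an^2}<\infty$.

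First I would define the candidate
\[
G(w):=\prod_{\gamma\in\Gamma,\,\gamma\ge0}\bigl(1-we^{-2a\gamma}\bigr)\,\prod_{\gamma\in\Gamma,\,\gamma<0}\bigl(1-w^{-1}e^{2a\gamma}\bigr).
\]
The counting hypothesis \eqref{gp} gives, via Stieltjes integration, $\sum_{\gamma\ge0}e^{-2a\gamma}<\infty$ and $\sum_{\gamma<0}e^{2a\gamma}<\infty$, so both products converge locally uniformly on $\Cc$ and on $\Cc^*$ respectively. Hence $G$ is holomorphic on $\Cc^*$ and its zeros there are exactly the points $e^{2a\gamma}$, all lying in $(0,\infty)$; this secures the zero-set requirement and the non-triviality of $f$.

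The crux is the coefficient estimate, and this is where the hypothesis $\rho<1$ is used. On the circle $|w|=R$ I would bound $\log|G|$ by splitting each sum $\sum_\gamma\log|1-\cdots|$ according to whether $e^{2a\gamma}$ is smaller or larger than $R$; the dominant contribution comes from the $\approx\log R-2a\gamma$ terms with $0\le\gamma<\frac{\log R}{2a}$. Writing $n(r)=\#\Gamma\cap[0,r)$ and integrating by parts, the bound \eqref{gp} yields
\[
\log\max_{|w|=R}|G|\le \frac{\rho}{4a}(\log R)^2+C\log R+C',\qquad R\to\infty,
\]
and a symmetric estimate as $R\to0$ using $\#\Gamma\cap(-r,0)\le\rho r+K$. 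Since $\rho<1$, the leading constant $\frac{\rho}{4a}$ is strictly below $\frac{1}{4a}$. Cauchy's inequality $|d_n|\le R^{-n}\max_{|w|=R}|G|$, optimized at $\log R=2an/\rho$, then gives $|d_n|\lesssim e^{-an^2/\rho}$ for every $n\in\Z$, whence $\sum_n|d_n|e^{an^2}\lesssim\sum_n e^{-a(1/\rho-1)n^2}<\infty$. This places $c=(d_ne^{an^2})$ in $\ell^1(\Z)$, so the associated $f\in V^1_a(\R)$ is the desired non-trivial function with zero set exactly $\Gamma$.

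The main obstacle is the growth estimate of the preceding paragraph: one must pass from the purely combinatorial information \eqref{gp} to the quadratic-in-$\log R$ growth of $G$ with the sharp constant $\frac{\rho}{4a}$, and carry this out symmetrically at both $0$ and $\infty$ so that the saddle-point Cauchy estimate beats the factor $e^{an^2}$. Everything else---convergence of the products and the identification of the zero set---is routine once the substitution $w=e^{2az}$ is in place.
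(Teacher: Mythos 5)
Your proposal is correct and follows essentially the same route as the paper: the same infinite product (you write it directly in the variable $w=e^{2az}$, whereas the paper first forms $g(z)$ and substitutes afterwards), the same quadratic-in-$\log R$ growth bound obtained from the counting hypothesis by partial summation, and the same saddle-point Cauchy estimate on the Laurent coefficients, with $\rho<1$ supplying the summability of $|d_n|e^{an^2}$. The only cosmetic difference is that you keep the sharp constant $\rho/(4a)$ where the paper works with $(1-\epsilon)/(4a)$ for $\epsilon<1-\rho$.
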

\begin{proof}
 Throughout the proof, we denote by $C$ different positive constants. Set
$$g(z):=\prod_{\gamma\in\Gamma, \gamma\geq0}\left(1-e^{2a(z-\gamma)}\right)\prod_{\gamma\in\Gamma,\gamma<0}\left(1-e^{-2a(z-\gamma)}\right)=:g_+(z)g_-(z).$$
Since $\Gamma$ is separated, each product above converges uniformly on compacts in the complex plane, and so $g_+$ and $g_-$ are entire functions. Clearly, $\Gamma$ is the zero set of $g$.

One can  easily check that the following inequalities hold true:
\begin{equation}\label{e}
    |g_+(z)|<C, \ x\leq 0, \quad |g_-(z)|<C, \ x\geq0,\end{equation} and $$ \left|\prod_{\gamma \geq x}\left(1-e^{2a(z-\gamma)}\right)\right|<C, \ z=x+iy , x\geq0. $$

Let us estimate $g_+$ from above. Set $$n_+(r)=\#\Gamma\cap[0,r).$$Using the last estimate, we get
$$|g_+(x+iy)|\leq C\prod_{0\leq\gamma< x}\left(1+e^{2a(x-\gamma)}\right)\leq C\prod_{0\leq\gamma< x}\left( 2e^{2a(x-\gamma)}\right)\leq$$
$$C2^{n_+(x)}\exp\{2a(xn_+(x)-\sum_{0\leq\gamma< x}\gamma)\}.$$Since
$$\sum_{0\leq\gamma< x}\gamma=\int_{[0,x)}td n_+(t)=xn_+(x)-\int_0^x n_+(x)\,dx,$$by \eqref{gp},  we get
$$|g_+(z)|\leq C e^{a(1-\epsilon)x^2},\quad z=x+iy,\ x\geq0,$$
where $\epsilon$ is any positive number satisfying $\epsilon<1-\rho$ and $C=C(\epsilon,a).$

Similarly, we get the estimate
$$|g_-(z)|\leq C e^{a(1-\epsilon)x^2},\quad x\leq0.$$ This  and \eqref{e} imply
$$|g(z)|\leq Ce^{a(1-\epsilon)x^2},\quad z\in\mathbb C.$$

Set $w=e^{2az}$, and so $x=\log |w|/2a$.  Set
$$h(w):=\prod_{\gamma\in\Gamma, \gamma\geq0}\left(1-we^{-2a\gamma}\right)\prod_{\gamma\in\Gamma,\gamma<0}\left(1-\frac{e^{2a\gamma}}{w}\right).$$
Then $g(z)=h(e^{2az})$. The first product above is an entire function while the second one is analytic for $|w|>0$. Therefore, $h$ is analytic for $|w|>0$ and so can be written as a Laurent series
$$h(w)=\sum_{k=-\infty}^\infty b_kw^k,\quad w\in\Cc, \ w\ne0.$$ By Cauchy-Hadamard's formula,
$$\limsup_{k\to\pm\infty}|b_k|^{1/|k|}=0,$$so that $b_k$ have a superexponential decay  as $k\to\pm\infty$.

For every $|w|\geq 1$ we obtain
$$\left|\sum_{k\geq0}b_kw^k\right|\leq |g(w)|+C\leq Ce^{(1-\epsilon)\log^2|w|/4a}.$$
By Cauchy's inequality, for every $R>0$,
$$|b_k|\leq C\frac{\exp\{(1-\epsilon)\log^2 R/4a\}}{R^k}.$$Letting $R=\exp(2ak/(1-\epsilon))$, we get
$$|b_k|\leq C\exp\{-ak^2/(1-\epsilon)\}, \quad k\geq 0.$$Similarly, one may check that the same estimate holds for every $k<0.$ This shows that the function
$$f(z):=e^{-az^2}g(z)=e^{-az^2}h(e^{2az})$$
admits the representation 
\begin{equation}\label{f}f(z)=\sum_{k\in\mathbb Z}c_k e^{-a(z-k)^2},\quad \{c_k\}=\{b_ke^{ak^2}\}\in l^1(\Z).\end{equation}
Hence, $f\in V_a^1(\R)$. Since $g$ vanishes on $\Gamma,$ so does $f.$
\end{proof}

\section{Proof of Theorem \ref{main_result}}\label{sec_3}
By Proposition \ref{prop_s3} we can focus on $p=\infty$, and extend the conclusions to other values of $p$.

\noindent {\bf Part (a)}. We assume that $\sigma(\vv)=\infty$ and need to show that $\Lambda$ is a ST
for $V^\infty_a(\R^2)$ if and only if $D^{-}(\Gamma)>0$. The necessity of the density condition follows from Lemma \ref{l2}. For the sufficiency we invoke the characterization of sampling trajectories by weak limits (Proposition \ref{prop_s3}). Since each such weak limit satisfies the same conditions that $\Lambda$ does (here it is important that $\Gamma$ be separated), it suffices to show that $\Lambda$ is a US for $V^\infty_a(\R^2)$ whenever $D^{-}(\Gamma)>0$. This follows from Theorem \ref{t_uniq}, part (i).

\noindent {\bf Part (b)}. Assume  that $\sigma(\vv)<\infty$ and suppose first that $D^-(\Gamma)> 1/\sigma(\vv)$. As before, by Proposition \ref{prop_s3}, after taking weak limits, it is enough to show that $\Lambda$ is a US for $V^\infty_a(\R^2)$. This is proved in Theorem \ref{t_uniq}, part (ii).

Second, suppose that $D^-(\Gamma)<1/\sigma(\vv)$. Since
$D^-(\Gamma/\sigma(\vv))=\sigma(\vv) D^-(\Gamma)<1$, by Theorem \ref{th_i3} and Proposition \ref{prop_s1}, $\Gamma/\sigma(\vv)$ is not a SS for $V^\infty_{a\sigma(\vv)^2}(\mathbb{R})$. Hence, there exists a sequence of functions
\begin{align}\label{eq_p2}
g^k (\zeta) = \sum_{n \in \Z} c^k_n e^{-a\sigma(\vv)^2(\zeta-n)^2}  \in V^\infty_{\sigma(\vv)^2a}(\mathbb{R})
\end{align}
with
\begin{align*}
1= \|c^k\|_\infty \asymp \|g^k\|_\infty
\end{align*}
such that
\begin{align*}
\|g^k\,|_{\Gamma/\sigma(\vv)}\|_\infty \to 0, \mbox{ as } k \to \infty.
\end{align*}
Since $\sigma(\vv) < \infty$, there exist $p,q \in \Z$ such that $\sigma(\vv) = \sqrt{p^2+q^2}$, $\gcd(p,q)=1$, and $\frac{pz+qw}{\sigma} \in \Gamma$ iff $(z,w) \in \Lambda.$
Let us set
\begin{align*}
f^k(z,w):=\exp\left\{a\left(\frac{pz+qw}{\sigma(\vv)}\right)^2-az^2-aw^2\right\}g^k\left(\frac{pz+qw}{\sigma(\vv)^2}\right),
\end{align*}
and observe the following.
\begin{claim}\label{claim_f_v}
    Each function $f^k$ admits the representation
    \begin{equation}\label{eq_p1}
        f^k(z,w) = \sum_{n\in\mathbb Z}c^k_n e^{-a(z-pn)^2-a(w-qn)^2},
    \end{equation}
with uniform convergence on compact sets.
\end{claim}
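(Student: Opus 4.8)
The plan is to prove the claim by a direct substitution followed by an algebraic simplification of the exponents, since there is no genuine analytic obstruction here --- the content is entirely in completing the square correctly. Write $\sigma := \sigma(\vv) = \sqrt{p^2+q^2}$ for brevity. First I would plug the defining series \eqref{eq_p2} into the formula for $f^k$, producing a single series whose $n$-th term is $c^k_n$ multiplied by one exponential. Introducing the linear form $u := pz + qw$, the argument of $g^k$ is $u/\sigma^2$, and using $\sigma^2 = p^2+q^2$ the $n$-th Gaussian exponent of $g^k(u/\sigma^2)$ becomes
\begin{equation*}
-a\sigma^2\Bigl(\tfrac{u}{\sigma^2}-n\Bigr)^2 = -\tfrac{a}{\sigma^2}\bigl(u-n\sigma^2\bigr)^2.
\end{equation*}

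The key simplification is then a difference-of-squares identity. Adding the prefactor exponent $\tfrac{a}{\sigma^2}u^2 - az^2 - aw^2$ to the term above, the two $\sigma^{-2}$-contributions combine as
\begin{equation*}
\tfrac{a}{\sigma^2}\bigl(u^2 - (u-n\sigma^2)^2\bigr) = \tfrac{a}{\sigma^2}\cdot n\sigma^2\,(2u - n\sigma^2) = 2an(pz+qw) - an^2(p^2+q^2).
\end{equation*}
Hence the total exponent of the $n$-th term equals $-az^2 - aw^2 + 2an(pz+qw) - an^2(p^2+q^2)$, and expanding the right-hand side of the claimed formula shows
\begin{equation*}
-a(z-pn)^2 - a(w-qn)^2 = -az^2 - aw^2 + 2an(pz+qw) - an^2(p^2+q^2),
\end{equation*}
so the two expressions agree term by term. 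This gives the desired representation \eqref{eq_p1}.

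It remains to justify convergence. By the preliminaries in Section \ref{sec_p}, the (analytically extended) series for $g^k$ converges uniformly on compact subsets of $\Cc$. Since the affine map $(z,w)\mapsto (pz+qw)/\sigma^2$ sends compact sets to compact sets, uniform convergence on compacts is preserved under this substitution; and since the prefactor $\exp\{a(pz+qw)^2/\sigma^2 - az^2 - aw^2\}$ is entire, hence bounded on compact sets, multiplying by it again preserves uniform convergence on compacts. The term-by-term algebraic identity above therefore legitimately rearranges into the series \eqref{eq_p1}, which inherits the same uniform-on-compacts convergence. The only point requiring care is the bookkeeping in the completion of the square; there is no substantive analytic difficulty.
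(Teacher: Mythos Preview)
Your proof is correct and follows essentially the same approach as the paper: both substitute the series for $g^k$ into the definition of $f^k$, verify by direct expansion that the exponent of the $n$-th term simplifies to $-a(z-pn)^2-a(w-qn)^2$, and then note that uniform convergence on compacts is inherited because the linear map $(z,w)\mapsto (pz+qw)/\sigma^2$ sends compacts to compacts. Your presentation is slightly more detailed in the algebra (using the difference-of-squares factorization explicitly) and in the convergence justification, but the argument is the same.
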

\begin{proof}
    We write
     $$
    f^k(z,w) = \sum_{n \in \Z} c^k_n \exp\left\{a\left(\frac{pz+qw}{\sigma(\vv)}\right)^2-az^2-aw^2-a\sigma(\vv)^2 \left(\frac{pz+qw}{\sigma(\vv)^2} -n \right)^2\right\} 
    $$
    and expand the exponent as
    \begin{align*}
    &\left(\frac{pz+qw}{\sigma(\vv)}\right)^2 - z^2 - w^2 - \sigma(\vv)^2 \left(\frac{pz+qw}{\sigma(\vv)^2} - n\right)^2 
    \\
    &\qquad= -z^2-w^2+2(pz+qw)n - (p^2+q^2)n^2 =       
    -(z-pn)^2-(w-qn)^2,
    \end{align*}
    which formally gives \eqref{eq_p1}. If $|z|, |w| \leq R$, then $|pz+qw| \lesssim R$, so the locally uniform convergence of the series \eqref{eq_p1} follows from that of \eqref{eq_p2}.
\end{proof}
Thus, since $c^k \in \ell^\infty(\mathbb{Z})$, we have that $f^k \in V^\infty_a(\mathbb{R}^2)$. In addition, by Cauchy-Schwarz, 
\begin{align}\label{eq_cs}
\left(\frac{pz+qw}{\sigma(\vv)}\right)^2 \leq z^2+w^2, \qquad z,w \in \R.
\end{align}
Therefore, $\|f^k |_\Lambda \|_\infty \leq 
\|g^k\,|_{\Gamma/\sigma(\vv)} \|_\infty \to 0$, while $\|f^k\|_\infty \asymp \|c^k\|_\infty =1$. Hence, $\Lambda$ is not a ST for $V^\infty_a(\R^2)$. The conclusion extends to all $V^p_a(\R^2)$, $1 \leq p \leq \infty,$ by Proposition \ref{prop_s3}.

\noindent {\bf Part (c)}. Assume that $D^+(\Gamma)<1/\sigma(\vv)$. Since $V^1_a(\R^2) \subseteq V^p_a(\R^2)$, it suffices to show that the set $\La$ in \eqref{la} is not a US for $V^1_a(\R^2)$. 

Observe that the set $\Gamma/\sigma(\vv)$ satisfies $D^+(\Gamma/\sigma(\vv))=\sigma(\vv) D^+(\Gamma)<1.$ Inspecting \eqref{dplus}, we see that for every $\rho>\sigma(\vv)D^+(\Gamma)$ there is an adequate constant $K$ such that \eqref{gp} holds for the set $\Gamma/\sigma(\vv)$. In particular, we may choose such a $\rho$ satisfying $\rho<1.$ Hence, by Proposition~\ref{prop_1d}, there exists a non-trivial function $g$ of the form
$$g(\zeta)=\sum_{n\in\mathbb Z}c_ne^{-a\sigma(\vv)^2(\zeta-n)^2},\quad \{c_n\}\in l^1(\Z),$$which vanishes on $\Gamma/\sigma(\vv).$
As before, we set
\begin{equation}\label{proj_f}
f(z,w):=\exp\left\{a\left(\frac{pz+qw}{\sigma(\vv)}\right)^2-az^2-aw^2\right\}g\left(\frac{pz+qw}{\sigma(\vv)^2}\right)
\end{equation}
and observe that this function vanishes for  $(p/\sigma^2(\vv),q/\sigma^2(\vv))\cdot(z,w)\in\Gamma/\sigma(\vv)$, which means that it vanishes on $ \Lambda$. In addition, by Claim~\ref{claim_f_v} ---
with $f$ in lieu of $f^k$ and $c$ in lieu of $c^k$ ---
we conclude that $f\in V^1_a(\R^2).$  This shows that $\Lambda$ is not a US for $V^1_a(\R^2)$, and concludes the proof of the theorem.
\qed

\section{Proof of Theorem \ref{th_samp}}\label{sec_4}
If $p=0$ or $q=0$, then $\sigma = 1$, and the conclusion follows from \eqref{eq_i6}.

\noindent {\bf Part (a)}. We first assume that $D^{-}(\Gamma_1) > 1,  D^{-}(\Gamma_2) >1$.
By Proposition \ref{prop_s1} it is enough to show that each weak limit of translates of $\Lambda$ is a uniqueness set for $V^\infty_a(\R^2)$. Since each such set satisfies similar conditions as $\Lambda$, we focus on showing that $\Lambda$ is a uniqueness set for $V^\infty_a(\R^2)$.

The following argument refines the one of the proof of Theorem \ref{t_uniq}, part (ii). Let
\begin{align*}
    f(x,y) = \sum_{(n,m) \in \mathbb{Z}^2} c_{n,m} e^{-a(x-n)^2-a(y-m)^2},
\end{align*}
with $c \in \ell^\infty(\Z^2)$ and assume that $f \equiv 0$ on $\Lambda$. Let $(u,v) \in \tfrac{1}{\sigma}\Gamma_1 \times \sigma \Gamma_2$.
Then
\begin{equation}\label{eq_func_rot1}
0 = f(\tfrac{p}{\sigma} u - \tfrac{q}{\sigma} v, \tfrac{q}{\sigma} u + \tfrac{p}{\sigma} v)
= \sum_{(n,m) \in \mathbb{Z}^2} c_{n,m} 
\exp\Big\{-a( \tfrac{p}{\sigma} u - \tfrac{q}{\sigma} v-n)^2-a(\tfrac{q}{\sigma} u + \tfrac{p}{\sigma} v-m)^2\Big\}.
\end{equation}
Let us denote
$$
R := \begin{bmatrix} p/\sigma & - q/\sigma\\
q/\sigma & p/\sigma
\end{bmatrix} .
$$
Using $R^{T}R=I$, where $I$ is identity matrix, 
\begin{align*}
\left|
R\begin{bmatrix} 
u \\ v
\end{bmatrix} - \begin{bmatrix} 
n \\ m
\end{bmatrix}\right| = \left|R^T \left( R\begin{bmatrix} 
u \\ v
\end{bmatrix} - \begin{bmatrix} 
n \\ m
\end{bmatrix} \right) \right| =
\left| \begin{bmatrix} 
u \\ v
\end{bmatrix} - R^T \begin{bmatrix} 
n \\ m
\end{bmatrix}
\right|.
\end{align*}
We can rewrite this relation as
$$
( \tfrac{p}{\sigma} u - \tfrac{q}{\sigma} v-n)^2-(\tfrac{q}{\sigma} u + \tfrac{p}{\sigma} v-m)^2 = (u-( \tfrac{p}{\sigma}n + \tfrac{q}{\sigma}m))^2 +
(v-(\tfrac{q}{\sigma}n-\tfrac{p}{\sigma}m))^2,
$$
which together with~\eqref{eq_func_rot1} implies
$$0=\sum_{(n,m) \in \mathbb{Z}^2} c_{n,m} 
\exp\Big\{-a
\big[(u-( \tfrac{p}{\sigma}n + \tfrac{q}{\sigma}m))^2 +
(v-(\tfrac{q}{\sigma}n-\tfrac{p}{\sigma}m))^2
\big]\Big\}.
$$

Since $\gcd(p,q)=1$, for every $k\in\mathbb Z$, we can choose $n_k,m_k \in \mathbb{Z}$ such that
$pn_k+qm_k=k$. Then $pn+qm=k$ if and only if $n=n_k+ql,m=m_k-pl$, for a uniquely determined $l\in \mathbb Z$. Thus,
\begin{align*}
0&=\sum_{k \in \mathbb{Z}} \sum_{l \in \mathbb{Z}} 
c_{n_k + ql, m_k - pl}
\exp\Big\{-a
\big[(u-\tfrac{k}{\sigma})^2 +
[v-(\tfrac{q}{\sigma}(n_k+ql)-\tfrac{p}{\sigma}(m_k-pl))]^2
\big]\Big\}
\\
&=\sum_{k \in \mathbb{Z}} d_k(v) \exp\big\{-a(u-\tfrac{k}{\sigma})^2\big\},
\end{align*}
where
\begin{align*}
d_k(v) = \sum_{l \in \mathbb{Z}} 
c_{n_k + ql, m_k - pl}
\exp\Big\{-a
\big[v-\tfrac{q}{\sigma}n_k+\tfrac{p}{\sigma}m_k-\sigma l\big]^2\Big\}.
\end{align*}

Let us note that, for each $v \in \sigma \Gamma_2$, $\{d_k(v)\}_{k \in \Z} \in \ell^\infty(\Z)$. Indeed, clearly
$$
|d_k(v)| \le
\|c\|_{\infty}\sup_{t\in\R} \sum_{l \in \mathbb{Z}} 
\exp\Big\{-a(\sigma l-t)^2\Big\} < \infty.
$$

Thus, for each $v \in \sigma \Gamma_2$, the function
\begin{align*}
f_v(t) := \sum_{k \in \mathbb{Z}} d_k(v) \exp\Big\{-\frac{a}{\sigma}(t-k)^2\Big\}
\end{align*}
belongs to the space $V^\infty_{a/\sigma}(\mathbb{R})$ and vanishes on the set
$\Gamma_1$. By Theorem \ref{th_i3} and Proposition \ref{prop_s1}, we conclude that $f_v \equiv 0$ and therefore
\begin{align*}
    d_k(v)=0, \qquad k \in \Z,\, v \in \sigma \Gamma_2.
\end{align*}
As a consequence, for each $k \in \Z$, the function
\begin{align*}
f^k(t) := \sum_{l \in \mathbb{Z}} 
c_{n_k + ql, m_k - pl}
\exp\big\{-a \sigma^2
(t-l)^2\big\}
\end{align*}
vanishes on the set $\Gamma_2-\tfrac{q}{\sigma^2}n_k+\tfrac{p}{\sigma^2}m_k$, which is separated and has lower Beurling density $>1$, while $f^k \in V^\infty_{{\sigma}^2a}(\mathbb{R})$ since $c \in \ell^\infty(\Z)$. By Theorem \ref{th_i3} and Proposition \ref{prop_s1} we conclude that
\begin{align*}
 c_{n_k + ql, m_k - pl} = 0, \qquad k,l \in \Z, 
\end{align*}
and therefore $f \equiv 0$. 

To finish the proof of part (a), it remains to consider the case $D^{-}(\Gamma_1) > \frac{1}{\sigma^2}, D^{-}(\Gamma_2) > \sigma^2 $. To this end, observe that for any $\gamma_1 \in \Gamma_1$ and $\gamma_2 \in \Gamma_2$ we have
$$
\begin{bmatrix} p/\sigma & - q/\sigma\\
q/\sigma & p/\sigma
\end{bmatrix}
\begin{bmatrix} \gamma_1/\sigma \\
\sigma\gamma_2
\end{bmatrix} = 
\begin{bmatrix} p/\sigma & - q/\sigma\\
q/\sigma & p/\sigma
\end{bmatrix}
\begin{bmatrix} 0 & 1\\
-1 & 0
\end{bmatrix}
\begin{bmatrix}  -\sigma\gamma_2 \\
\gamma_1/\sigma
\end{bmatrix} =
\begin{bmatrix} q/\sigma & p/\sigma\\
-p/\sigma & q/\sigma
\end{bmatrix}
\begin{bmatrix}  -\sigma\gamma_2 \\
\gamma_1/\sigma
\end{bmatrix}.$$
Therefore, we see that $\Lambda$ admits the representation
\begin{align}\label{eq_lattice}
\Lambda = \begin{bmatrix} q/\sigma & p/\sigma\\
-p/\sigma & q/\sigma
\end{bmatrix} \cdot \tfrac{1}{\sigma} (-\sigma^2 \Gamma_2) \times \sigma \left(\frac{\Gamma_1}{\sigma^2}\right).
\end{align}
Since $D^{-}\left(\frac{\Gamma_1}{\sigma^2}\right) > 1$ and $D^{-}\left({\sigma^2}\Gamma_2\right) > 1$, 
the conclusion follows from the already considered case.

\noindent {\bf Part (b)}. Assume that $\Lambda$ is a sampling set for $V^2_a(\mathbb{R}^2)$. Then, by Proposition \ref{prop_s1}, $\Lambda$ is a also a sampling set for $V^\infty_a(\mathbb{R}^2)$. Consider the set $\Gamma= \sigma \Gamma_2$ and the unit vector $\vv=(-q/\sigma,p/\sigma)$ and note that $\Lambda$ is included in the following family of parallel lines:
\begin{align}\label{eq_b}
\{(x,y) \in \mathbb{R}^2: (x,y)\cdot (-q/\sigma,p/\sigma) \in \sigma\Gamma_2\} = \begin{bmatrix} p/\sigma & - q/\sigma\\
q/\sigma & p/\sigma
\end{bmatrix} \cdot \mathbb{R} \times \sigma \Gamma_2.
\end{align}
Since $\Lambda$ is a sampling set for $V^\infty_a(\mathbb{R}^2)$, so is the collection of lines \eqref{eq_b}, and Theorem \ref{main_result} implies that
\begin{align*}
D^{-}(\Gamma_2) = \sigma D^{-}(\Gamma) \geq 1.
\end{align*}
Second, the alternative representation of $\Lambda$ in \eqref{eq_lattice} shows that also
\begin{align*}
\sigma^2 D^-(\Gamma_1) = D^{-}\left(\frac{\Gamma_1}{\sigma^2}\right) \geq 1.
\end{align*}
Finally, Landau's necessary density conditions --- see Section \ref{sec_p} --- imply that \[D^-(\Gamma_1) D^-(\Gamma_2) = D^-(\Gamma_1 \times \Gamma_2) \geq 1.\]

\noindent {\bf Part (c)}.
Now we give some examples for the cases of critical densities. Assume that $(p,q) \in \Z^2$ and $\gcd(p,q) = 1$. It suffices to show that the sets $\Lambda'$ and $\Lambda''$ given by
$$
\Lambda' = \begin{bmatrix} p/\sigma & - q/\sigma\\
q/\sigma & p/\sigma
\end{bmatrix} \cdot \tfrac{1}{\sigma} (\sigma^2 (\Z+1/2)) \times \sigma \Gamma_2'. \quad \text{and} \quad \Lambda'' = \begin{bmatrix} p/\sigma & - q/\sigma\\
q/\sigma & p/\sigma
\end{bmatrix} \cdot \tfrac{1}{\sigma} \Gamma_1' \times \sigma (\Z+1/2)
$$
are not US for $V^{\infty}_a(\R^2)$ for any choice of $\Gamma_1',\Gamma_2' \subset \R$. Fix two such sets $\Gamma_1',\Gamma_2'$, and let us construct functions $f_1, f_2\in V^{\infty}_a(\R^2)$ such that $f_1|_{\Lambda'} = 0$ and $f_2|_{\Lambda''} = 0$.

We first note that the function
$$
g(x):= \sum\limits_{n \in \Z} (-1)^n e^{-a\sigma^2(x-n)^2}
$$
belongs to $V^\infty_{a\sigma^2}(\R)$ and vanishes on $\Z + \frac{1}{2}$. (This follows from an elementary computation, or from basic properties of Jacobi's theta function \cite[Chapter 16]{MR1225604}.)

Second, if $\Lambda$ has the form \eqref{eq_lambdaset}, then each point $(\lambda_1, \lambda_2) \in \Lambda$ can be written as
$$(\lambda_1, \lambda_2) = (p\gamma_1/\sigma^2 - q \gamma_2, q\gamma_1/\sigma^2 + p \gamma_2),$$ where $\gamma_j \in \Gamma_j, j=1,2,$ are such that
$$
\frac{p\lambda_2 - q \lambda_1}{\sigma^2} = \left(-\frac{qp}{\sigma^4} + \frac{pq}{\sigma^4}\right)\gamma_1 + \frac{q^2+p^2}{\sigma^2} \gamma_2 = \gamma_2.
$$
$$
\frac{p \lambda_1+q\lambda_2}{\sigma^2} = \left(\frac{p^2}{\sigma^4} + \frac{q^2}{\sigma^4}\right)\gamma_1 + \frac{-pq+pq}{\sigma^2} \gamma_2 = \frac{\gamma_1}{\sigma^2}.
$$
Therefore, the function
$$
f_1(z,w):=g\left(\frac{pz + qw}{\sigma^2}\right) \exp\left\{a\left(\frac{pz+qw}{\sigma(\vv)}\right)^2-az^2-aw^2\right\}
$$
vanishes on $\Lambda'$ while the function
$$
f_2(z,w):=g\left(\frac{pw - qz}{\sigma^2}\right) \exp\left\{a\left(\frac{pw-qz}{\sigma(\vv)}\right)^2-az^2-aw^2\right\}
$$
vanishes on $\Lambda''$. Similarly to Claim \ref{claim_f_v}, one can check that $f_1$ and $f_2$ belong to $V^{\infty}_a(\R^2)$.
\qed

\section{Gabor Frames for certain rational two-dimensional lattices}\label{sec_5}

To connect the sampling theorems for shift-invariant spaces with Gabor frames, we need the following result which can be traced back to Janssen \cite{jan95}. For concreteness, we cite a simplified version of \cite[Theorem~2.3]{grs}.\footnote{The result in \cite{grs} is formulated in dimension $d=1$, but the arguments extend verbatim to $d=2$.}

\begin{proposition}\label{prop_gabor_sampling}
    Let $\Lambda \subset \R^2$ be separated, $a>0$, and consider the Gaussian function $g_a(x,y) = e^{-a(x^2+y^2)}, (x,y)\in\R^2$.
        Then the following statements are equivalent:
    \begin{enumerate}
        \item[(i)] The Gabor system $\mathcal{G}(g_a,(-\Lambda)\times \Z^2)$ is a frame of $L^2(\R^2).$
        \item[(ii)] The set $\Lambda + (u,v)$ is a sampling set of $V^2_a(\R^2)$ for every $(u,v) \in [0,1)^2.$ 
    \end{enumerate}      
\end{proposition}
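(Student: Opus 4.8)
The statement to prove is Proposition~\ref{prop_gabor_sampling}, which asserts the equivalence between the Gabor frame property of $\mathcal{G}(g_a,(-\Lambda)\times\Z^2)$ and a family of sampling conditions for the shift-invariant space $V^2_a(\R^2)$. This is a known type of result going back to Janssen, so the plan is to reduce the four-dimensional Gabor frame problem to the two-dimensional sampling problem by exploiting the product structure of the time-frequency nodes $(-\Lambda)\times\Z^2$ and the tensor structure of the Gaussian window.

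The plan is to begin with the Gabor frame inequality, which controls $\sum_{(\lambda,k)}|\langle \pi(-\lambda,k)g_a,f\rangle|^2$ from above and below by $\|f\|_2^2$ for all $f\in L^2(\R^2)$. The key step is to carry out the summation over the integer frequency nodes $k\in\Z^2$ first. Here I would use the Poisson summation formula (or, equivalently, the periodization identity underlying the Walnut/Janssen representation): summing $|\langle \pi(-\lambda,k)g_a,f\rangle|^2$ over $k\in\Z^2$ turns the frequency modulations into a periodization, and because the frequency lattice is exactly $\Z^2$, the resulting quantity becomes an integral over the unit cube $[0,1)^2$ of a sampled expression. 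Concretely, fixing $\lambda$ and summing over $k$ should produce, up to harmless constants, an expression of the form $\int_{[0,1)^2}|F_{(u,v)}(\lambda)|^2\,du\,dv$, where $F_{(u,v)}$ is a function built from $f$ and the translated Gaussian, lying in (a shift of) $V^2_a(\R^2)$. The reflection by $-\Lambda$ and the precise correspondence between $f\in L^2$ and functions in the shift-invariant space must be tracked carefully; this is where the generator $g_a$ being an isotropic Gaussian, and hence its own Fourier transform up to dilation, is used to identify the relevant space as $V^2_a(\R^2)$ with the same parameter $a$.

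After this interchange, the full Gabor frame bounds become equivalent to the assertion that the sampling inequalities
\begin{align*}
A\|F_{(u,v)}\|_2^2 \leq \sum_{\lambda\in\Lambda}|F_{(u,v)}(\lambda)|^2 \leq B\|F_{(u,v)}\|_2^2
\end{align*}
hold, in an integrated-in-$(u,v)$ sense, for the family $\{F_{(u,v)}\}$. The remaining step is to upgrade this integrated statement to the pointwise-in-$(u,v)$ sampling condition of part (ii): that $\Lambda+(u,v)$ be a sampling set for \emph{every} $(u,v)\in[0,1)^2$. One direction is routine once the correspondence $(u,v)\leftrightarrow$ translate of $\Lambda$ is set up, since a uniform sampling bound for every shift integrates to the Gabor frame bound. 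For the converse I would use the fact that every $f\in V^2_a(\R^2)$ arises as some $F_{(u,v_0)}$, together with the continuity and translation-covariance of the setup, to localize the integrated inequality at a single $(u,v)$; the separatedness of $\Lambda$ and the equivalence of norms $\|f\|_2\asymp\|c\|_2$ recalled in Section~\ref{sec_p} guarantee the upper bound automatically and keep the constants uniform.

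The main obstacle I expect is the bookkeeping in the first step: correctly applying Poisson summation over the integer frequency lattice and identifying the periodized inner products as samples of a function in $V^2_a(\R^2)$, including keeping the reflection $-\Lambda$ and the shift parameter $(u,v)$ in their right places so that the shape parameter $a$ is preserved. Once that dictionary is established, passing between the integrated and pointwise sampling formulations is comparatively soft. Since this identity is classical and we are citing \cite{grs,jan95}, the cleanest route is to invoke the cited Janssen-type duality directly and verify only that our product lattice $(-\Lambda)\times\Z^2$ and isotropic Gaussian window fit its hypotheses, rather than reproving the periodization from scratch.
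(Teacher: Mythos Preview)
The paper does not prove this proposition at all: it is stated as a citation of a known result, namely a simplified version of \cite[Theorem~2.3]{grs} (with the remark that the one-dimensional argument there extends verbatim to $d=2$), traced back to Janssen \cite{jan95}. There is no proof in the paper to compare your proposal against.

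Your sketch of a direct argument via Poisson summation and the Walnut/Janssen periodization is reasonable in spirit and is indeed the mechanism underlying the cited result, but it is more than what the paper does. In fact, your final sentence---to simply invoke the cited Janssen-type duality and check that the product set $(-\Lambda)\times\Z^2$ and the Gaussian window satisfy the hypotheses---is exactly the paper's approach. If you want to match the paper, drop the extended sketch and keep only that last sentence.
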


We can now prove the main result on Gabor frames.
\begin{proof}[Proof of Theorem~\ref{Gabor_thm}]

\noindent {\bf Part (a)}.
    Observe that for any $(u,v) \in [0,1)^2$ the set $-\Lambda + (u,v)$ admits the representation~\eqref{eq_lambdaset} with 
    $$\Gamma_1'= -\Gamma_1 + pu +  qv, \quad \Gamma_2' = -\Gamma_2 -qu/\sigma^2 + pv/\sigma^2.$$
    
    Since $D^-(\Gamma_1') = D^-(\Gamma_1)$ and $D^-(\Gamma_2') = D^-(\Gamma_2)$, by Theorem~\ref{th_samp}, Part (a), we deduce that all translates $-\Lambda + x$, $x \in [0,1)^2,$ are sampling sets for $V^2_a(\R^2).$ Together with Proposition~\ref{prop_gabor_sampling}, this shows that $\mathcal{G}(g_a,\Lambda\times \Z^2)$ is a frame of $L^2(\R^2)$.
    
\noindent {\bf Part (b)}. Suppose that 
$\mathcal{G}(g_a,\Lambda\times \Z^2)$ is a frame of $L^2(\R^2)$. By Proposition~\ref{prop_gabor_sampling} and Part (b) of Theorem~\ref{th_samp}, we conclude that the density conditions
\eqref{eq_not_dens} hold. To prove that in fact the strict density conditions \eqref{eq_not_dens_2} hold we invoke the stability of Gabor frames under small deformations of the time-frequency lattice, which is applicable because the Gaussian function $g_a$ is smooth and fast decaying \cite{MR3192621,bel,fk,MR3336091}. More precisely, consider the matrices
\begin{align*}
A_r = \begin{bmatrix}
    r & 0 & 0 & 0\\
    0 & r & 0 & 0\\
    0 & 0 & 1 & 0\\
    0 & 0 & 0 & 1
\end{bmatrix}, \qquad r >0.
\end{align*}
By \cite[Theorem 1.1]{fk} there exists $r>1$ such that
\begin{align*}
\mathcal{G}(g_a,A_r(\Lambda\times \Z^2))
=\mathcal{G}(g_a,(r\Lambda)\times \Z^2)
=\mathcal{G}(g_a,\Lambda_r\times \Z^2)
\end{align*}
is a frame, where $\Lambda_r$ is associated with $r\Gamma_1$ and $r\Gamma_2$ by \eqref{eq_lambdaset}. Invoking again Proposition~\ref{prop_gabor_sampling} and Theorem~\ref{th_samp} we obtain
\begin{equation*}
r^{-1}D^{-}(\Gamma_1) \ge \frac{1}{\sigma^2}, 
\quad r^{-1}D^{-}(\Gamma_2) \ge 1, \quad\mbox{and} \quad r^{-1}D^{-}(\Gamma_1) D^{-}(\Gamma_2) \ge 1,
\end{equation*}
which implies \eqref{eq_not_dens_2}.
\end{proof}

Finally, we apply the previous theorem to lattices.
\begin{proof}[Proof of Corollary \ref{coro_gabor}]
The case $a=b=1$ follows directly from Theorem \ref{Gabor_thm}. Consider now $\alpha>0$ and general $a,b>0$ and suppose that either $c<1$ and $d<1$, or $c<\sigma^2$ and $d<\sigma^{-2}$. By the previous case,
the Gabor system $\mathcal{G}(g_{\alpha/a^2, \alpha/a^2}, \Delta_{1,1,c,d})$ is a frame of $L^2(\R^2)$. A direct calculation shows that the isomorphism $L^2(\R^2) \ni f(x,y) \mapsto f(ax,by) \in L^2(\R^2)$ maps $\mathcal{G}(g_{\alpha/a^2, \alpha/a^2}, \Delta_{1,1,c,d})$ into
$\mathcal{G}(g_{\alpha, \alpha b^2/a^2}, \Delta_{a,b,c,d})$, and thus the latter Gabor system is also a frame of $L^2(\R^2)$.
\end{proof}

\section{Extension to other generators}\label{sec_other}
We consider a continuous function $\varphi: \mathbb{R}^2 \to \mathbb{C}$ with
\begin{align}\label{am}
\sum_{(n,m)\in\mathbb{Z}^2} \sup_{(x,y)\in[0,1]^2} |\varphi(x+n,y+m)| < \infty
\end{align}
and look into extending our results to the \emph{shift-invariant space generated by $\varphi$}:
\begin{align}\label{eq_i2g}
V^2(\varphi):=\Big\{f(x,y)=\sum_{(n,m)\in\mathbb Z^2}c_{n,m}\varphi(x-n,y-m):c\in \ell^2(\mathbb Z^2)\Big\}.
\end{align}
Condition \eqref{am} implies that the series in \eqref{eq_i2g} converge in $L^2$. The \emph{Zak transform} of $\varphi$,
\begin{align}
Z\varphi(t,\omega) = \sum_{(n,m) \in \mathbb{Z}^2} \varphi(t_1+m,t_2+n) e^{2\pi i (n \omega_1+m\omega_2)}, \quad (t,\omega)=(t_1,t_2,\omega_1,\omega_2) \in \mathbb{R}^2\times\mathbb{R}^2,
\end{align}
is useful to investigate $V^2(\varphi)$. We mention a few facts that are easily proved; see 
\cite{jan95} or \cite[Theorem 2.3]{grs} for closely related statements.
\begin{lemma}\label{lem_new}
Let $\varphi: \mathbb{R}^2 \to \mathbb{C}$ be continuous and satisfy \eqref{am} and 
\[\inf_{(t,\omega) \in \mathbb{R}^2\times\mathbb{R}^2} |Z\varphi(t,\omega)| >0.\] Then the following hold.
\begin{itemize}
\item[(i)] The system of translates
$\{\varphi(x-n,y-m): (n,m)\in\mathbb{Z}^2\}$ is a Riesz basis of $V^2(\varphi)$, that is, every $f \in V^2(\varphi)$ admits a unique expansion as in \eqref{eq_i2g}, and, moreover, $\|f\|_2 \asymp \|c\|_2$. In addition, every $f \in V^2(\varphi)$ is continuous.

\item[(ii)] Let $\Lambda \subseteq \mathbb{R}^2$ be a finite union of separated sets. Then $\Lambda$ is a sampling set for $V^2(\varphi)$ if and only if $\{Z\varphi(\lambda,\omega): \lambda \in \Lambda\}$ is a frame of $L^2([0,1]^2,d\omega)$, that is, if and only if there exist $A,B>0$ such that
\begin{align}
A \|h\|^2_2 \leq \sum_{\lambda \in \Lambda} \Big| 
\int_{[0,1]^2} Z\varphi(\lambda,\omega) \overline{h(\omega)}\,d\omega
\Big|^2 \leq B \|h\|^2_2, \qquad {\text{for every  }\,\,} h \in L^2([0,1]^2).
\end{align}
\end{itemize}
\end{lemma}

As an application, we obtain the following extension of Theorem \ref{th_samp}.

\begin{corollary}\label{coro_new}
Let $\varphi: \mathbb{R}^2 \to \mathbb{C}$ be continuous and satisfy \eqref{am}, and consider also the Gaussian function $g_a(x,y)=e^{-a(x^2+y^2)}$ with $a>0$.

Suppose that there exist measurable functions $p,q: \mathbb{R} \to \mathbb{C}$
with
\begin{align}\label{eq_aaa1}
0 < \inf_{t \in \mathbb{R}^2} |p(t)| \leq 
\sup_{t \in \mathbb{R}^2} |p(t)| < \infty,
\\\label{eq_aaa2}
0 < \inf_{\omega \in \mathbb{R}^2} |q(\omega)| \leq 
\sup_{\omega \in \mathbb{R}^2} |q(\omega)| < \infty,
\end{align}
and 
\begin{align}\label{eq_aaa3}
Z\varphi(t,\omega) = p(t) q(\omega) Z{g_a} (t,\omega), \qquad (t,\omega) \in \mathbb{R}^2\times\mathbb{R}^2.
\end{align}
Then Theorem \ref{th_samp} holds with $V^2(\varphi)$ in lieu of $V^2_a(\mathbb{R}^2)$.
\end{corollary}
\begin{proof}
Using Lemma \ref{lem_new} and \eqref{eq_aaa1}, \eqref{eq_aaa2}, \eqref{eq_aaa3},
we see that a separated set $\Lambda \subseteq \mathbb{R}^2$ is a sampling set for $V^2(\varphi)$ if and only if it is a sampling set for $V^2_a(\mathbb{R}^2)$. Indeed, if
$\Lambda$ is a sampling set for 
$V_a^2(\mathbb{R}^2)$, then
\begin{align*}
\sum_{\lambda \in \Lambda} \Big| 
\int_{[0,1]^2} Z\varphi(\lambda,\omega) \overline{h(\omega)}\,d\omega
\Big|^2  &= \sum_{\lambda \in \Lambda} \Big| 
\int_{[0,1]^2} Zg_{a}(\lambda,\omega)p(\lambda) q(w) \overline{h(\omega)}\,d\omega
\Big|^2 
\\
&\asymp
\sum_{\lambda \in \Lambda} \Big| 
\int_{[0,1]^2} Zg_{a}(\lambda,\omega) q(w) \overline{h(\omega)}\,d\omega
\Big|^2 
\asymp \|\overline{q} h\|^2_2 \asymp \|h\|^2_2,
\end{align*}
and $\Lambda$ is also a sampling set for $V^2(\varphi)$. The converse implication is proved similarly.
\end{proof}
Similarly, though we do not provide the details here, Theorem \ref{Gabor_thm} and Corollary \ref{coro_gabor} can be extended to functions $\varphi$ as in Corollary \ref{coro_new} in lieu of the Gaussian function $g_a$, as long as $\varphi$ belongs to the so-called modulation space $M^1(\mathbb{R}^2)$ of functions with integrable short-time Fourier transforms (and $Z \varphi$ satisfies~\eqref{eq_aaa3}, \eqref{eq_aaa1}, and \eqref{eq_aaa2}). 

\begin{example}[Hyperbolic secant] Fix $a>0$ and let $\psi(t) := \frac{1}{e^{at}+e^{-at}}$ and $h(t) := e^{-at^2}$,
$t \in \mathbb{R}$. 
Janssen and Strohmer \cite{MR1884237} proved that the Zak transforms of $\psi$ and $h$, 
\begin{align*}
Z\psi(t,\omega) = \sum_{k\in\mathbb{Z}} \psi(t+k) e^{2\pi i k \omega}, \qquad 
Zh(t,\omega) = \sum_{k\in\mathbb{Z}} h(t+k) e^{2\pi i k \omega}, \qquad t,\omega\in\mathbb{R},
\end{align*}
are related by $Z\psi(t,\omega)=p(t) q(\omega) Z h(\omega)$ with $|p|,|q|$ bounded from above and below.

Let $\varphi(x,y)=\psi(x) \psi(y) = \frac{1}{(e^{ax}+e^{-ax})(e^{ay}+e^{-ay})}$. Then
\begin{align*}
Z\varphi(t_1,t_2,\omega_1,\omega_2) = Z\psi(t_1,\omega_1) Z\psi(t_2,\omega_2) = p(t_1) p(t_2) q(\omega_1) q(\omega_2) Zg_a(t_1,t_2,\omega_1,\omega_2)
\end{align*}
and the hypotheses of Corollary \ref{coro_new} are satisfied. Hence Theorem \ref{th_samp} holds with $V^2(\varphi)$ in lieu of $V^2_a(\mathbb{R}^2)$.
\end{example}

\section{Postponed proofs}\label{sec_pos}
We now prove Proposition \ref{prop_s3}
by reducing it to Proposition \ref{prop_s1} by the next discretization lemma. Similar arguments can be found in \cite{MR3935271,Jaming,JOC}.

\begin{lemma}\label{lemma_s2}
Let $\Gamma \subset \R$ be separated, $a>0$, and $1 \leq p \leq \infty$. Then the collection of lines \eqref{la} is a ST for $V^p_a(\R^2)$ if and only if there exists a separated set $\Lambda^0 \subset \Lambda$ that is a SS for $V^p_a(\R^2)$.
\end{lemma}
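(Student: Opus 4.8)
The goal is to show the equivalence between a continuous sampling trajectory along parallel lines and the existence of a discrete separated sampling subset. I plan to prove both implications, with the harder direction being the construction of a discrete sampling set from the continuous trajectory.

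The plan is as follows. The easy direction is that if some separated $\Lambda^0 \subset \Lambda$ is a SS for $V^p_a(\R^2)$, then $\Lambda$ is a ST. Indeed, for any $f \in V^p_a(\R^2)$ the continuous integral $\|f|_\Lambda\|_p$ dominates a suitably spaced discrete sum over $\Lambda^0$ up to constants, because functions in $V^p_a(\R^2)$ are uniformly smooth (their derivatives are again controlled by the Gaussian decay and the coefficient norm $\|c\|_p \asymp \|f\|_p$); a standard sampling-to-integral comparison then yields $\|f|_{\Lambda^0}\|_p \lesssim \|f|_\Lambda\|_p$, and the lower sampling bound for $\Lambda^0$ transfers to $\Lambda$. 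The upper bound $\|f|_\Lambda\|_p \lesssim \|f\|_p$ holds automatically for any collection of uniformly separated lines by the same smoothness/decay estimates.

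For the harder converse, suppose $\Lambda$ is a ST for $V^p_a(\R^2)$. The idea is to discretize each line in $\Lambda$ finely enough that the discrete sum approximates the arc-length integral. Concretely, for a parameter $\delta>0$ I would place points along each line of $\Lambda$ at arc-length spacing $\delta$, forming a separated set $\Lambda^0_\delta \subset \Lambda$. Using the uniform smoothness of $f \in V^p_a(\R^2)$ — more precisely a quantitative modulus of continuity of $f$ restricted to each line, uniform over $f$ with $\|f\|_p \leq 1$ and over all lines — one shows that the Riemann-type sum $\delta \|f|_{\Lambda^0_\delta}\|_p^p$ converges to $\|f|_\Lambda\|_p^p$ as $\delta \to 0$, uniformly in $f$. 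Hence for $\delta$ small enough the discrete quantity $\delta^{1/p}\|f|_{\Lambda^0_\delta}\|_p$ is comparable to $\|f|_\Lambda\|_p \asymp \|f\|_p$, so $\Lambda^0_\delta$ is a SS for $V^p_a(\R^2)$. The main obstacle is making this comparison \emph{uniform} over the whole class of functions with bounded norm: one needs an equicontinuity-type estimate showing that a single $\delta$ works for all $f$ simultaneously. This follows from the Bernstein-type inequality available in the Gaussian shift-invariant space, namely that $\|\partial^\alpha f\|_p \lesssim \|f\|_p$ with constants independent of $f$, which controls the oscillation of $f$ between consecutive sample points on each line at a scale independent of the line and of $f$.

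Finally, the two implications combine to give the stated equivalence, and by Proposition~\ref{prop_s1} the sampling property of the discrete separated set $\Lambda^0$ is independent of $p \in [1,\infty]$, so that the equivalence in Lemma~\ref{lemma_s2} likewise holds for every $p$, which is precisely what is needed to deduce Proposition~\ref{prop_s3} from Proposition~\ref{prop_s1}.
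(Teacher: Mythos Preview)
Your proposal follows the same overall strategy as the paper: discretize the family of lines at a small scale $\delta$, use smoothness of functions in $V^p_a(\R^2)$ to compare the arc-length integral with the discrete sum, and absorb the error into the lower sampling bound. The paper implements this by covering $\R^2$ with slanted cubes $Q_{k,\delta}$ of side $\delta$, selecting one point of $\Lambda$ in each cube that meets $\Lambda$, and running the comparison symmetrically in both directions (its Steps~3 and~4); your labeling of one direction as ``easy'' is a bit misleading, since both require the same type of oscillation estimate.

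There is, however, a genuine technical gap in your argument for $p<\infty$. You invoke the Bernstein-type inequality $\|\partial^\alpha f\|_p \lesssim \|f\|_p$ and claim it ``controls the oscillation of $f$ between consecutive sample points on each line.'' But a global $L^p$ bound on $\nabla f$ gives no pointwise control whatsoever: knowing $\|\nabla f\|_p$ is small does not force $|f(x)-f(y)|$ to be small for nearby $x,y$. What the argument actually needs is the Wiener-amalgam estimate
\[
\sum_{k\in\Z^2}\Big(\sup_{Q_{k,1}}|f|^p+\sup_{Q_{k,1}}|\nabla f|^p\Big)\lesssim \|f\|_p^p,
\]
i.e., $\ell^p$ control of the \emph{local suprema} of $f$ and $\nabla f$. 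This is precisely what the paper establishes in its Step~1 (citing \cite{ag01}), and it is the ingredient that produces an error term of the form $C\delta^p\|f\|_p^p$ uniformly over $f$, which can then be absorbed. Relatedly, your phrasing that the Riemann sum ``converges to $\|f|_\Lambda\|_p^p$ uniformly in $f$'' is not the right formulation: what one proves is an inequality with remainder $O(\delta^p)\|f\|_p^p$, not a uniform limit. Once you replace the Bernstein inequality by the amalgam estimate, the argument goes through and matches the paper's.
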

\begin{proof}
We focus on the case $p<\infty$; the arguments for $p=\infty$ are simpler and omitted. Consider the slanted cubes
$$Q_{k,\delta} := \begin{bmatrix} p/\sigma &  q/\sigma\\
-q/\sigma & p/\sigma
\end{bmatrix} \cdot \Big(
[-\delta/2, \delta/2)^2+ \delta k\Big), \qquad k \in \Z^2, $$
where $0<\delta < \delta(\Gamma)/3$.

\noindent {\bf Step 1}. Each function $f \in V^p_a(\R^2)$ admits a unique expansion
\begin{equation}\label{rb}
f=\sum_{k\in\Z^2} c_k g_a(\cdot-k), \qquad\mbox{with } 
\|c\|_p \asymp \|f\|_p.
\end{equation}
Since $g_a$ and $\nabla g_a$ are fast decaying, a straightforward argument then shows that 
\begin{align}\label{eq_a6}
\sum_{k \in \Z^2} \big( \sup_{Q_{k,1}}|f|^p + \sup_{Q_{k,1}}|\nabla f|^p \big) \lesssim \|f\|_p^p, \qquad f \in V^p_a(\R^2),
\end{align}
see, e.g., \cite{ag01}. As a consequence, for every $\eta$-separated set $\Lambda^0 \subset \R^2$, $\eta>0$, the upper sampling inequality
\begin{align}\label{eq_s5}
\sum_{\lambda \in \Lambda^0} |f(\lambda)|^p \leq C
(1+1/\eta)^p \|f\|^p_p, \qquad f \in V^p_a(\R^2),
\end{align}
holds with universal constant $C>0$.

\noindent {\bf Step 2}. 
Set $I_{k,\delta} := \{k \in \Z: Q_{k,\delta} \cap \Lambda \not= \emptyset\}$. For each $k \in I_{k,\delta}$ select a point $\lambda_{k,\delta} \in Q_{k,\delta} \cap \Lambda$,
in such a way that the resulting set $\Lambda^0_{\delta} := \{\lambda_{k,\delta}: k \in I_{k,\delta}\}$ is separated.

We claim that for small enough $\delta$, $\Lambda^0_{\delta}$ is a SS for $V^p_a(\R^2)$.
Note first that
\begin{align*}
|f(\lambda) - f(\lambda_{k,\delta})| \lesssim \sup_{Q_{k,\delta}} |\nabla f| \delta, \qquad \lambda \in Q_{k,\delta} \cap \Lambda,
\end{align*}
while the arc-length of $Q_{k,\delta} \cap \Lambda$ is 
$\lesssim \delta$.
On the other hand,
\begin{align}
\sup_{j \in \Z} \# \{k \in I_{k,\delta}: Q_{k,\delta} \cap Q_{j,1} \not=\emptyset \} \leq C_{\Lambda} \delta^{-1}.
\end{align}
Combining this with \eqref{eq_a6}, we estimate for $f \in V^p_a(\R^2)$,
\begin{align}\label{eq_a4}
\begin{aligned}
\int_\Lambda |f|^p \,ds &\lesssim 
\delta \sum_{k \in I_{k,\delta}} |f(\lambda_{k, \delta})|^p + C \delta^{1+p} 
\sum_{k \in I_{k,\delta}} \sup_{Q_{k,\delta}} |\nabla f|^p
\\
&\leq \delta \sum_{k \in I_{k,\delta}} |f(\lambda_{k, \delta})|^p + C'_\Lambda \delta^p \sum_{j \in I_{k,1}} \sup_{Q_{j,1}} |\nabla f|^p
\\
&\leq \delta \sum_{k \in I_{k,\delta}} |f(\lambda_{k, \delta})|^p + C''_\Lambda \delta^p \|f\|^p_p.
\end{aligned}
\end{align}
\noindent {\bf Step 3}.
Suppose that $\Lambda$ is a ST for $V^p_a(\R^2)$. Hence, there exists $A>0$ such that
\begin{align}\label{eq_a1}
\|f\|^p_p \leq A \int_\Lambda |f|^p \,ds, \qquad f \in V^p_a(\R^2).
\end{align}
Choosing $\delta$ so that $A C''_\Lambda \delta^p < 1/2$ gives a lower sampling inequality for $\Lambda^0_\delta$, while the upper sampling inequality follows from \eqref{eq_s5} because $\Lambda^0_\delta$ is separated.

\noindent {\bf Step 4}. 
Conversely, suppose that there exists a separated set $\Lambda^0 \subset \Lambda$ and $A>0$ such that
\begin{align}
\|f\|^p_p \leq A' \sum_{\lambda \in \Lambda^0} |f(\lambda)|^p, \qquad f \in V^p_a(\R^2),
\end{align}
and set $\eta := \inf\{|\lambda-\mu|_\infty: \lambda,\mu \in \Lambda^0, \lambda \not= \mu\}$
Choosing $\delta < \eta/3$, we see that for each $\lambda \in  \Lambda^0$ there exists a unique $k_\lambda \in \Z^2$ such that $\lambda \in Q_{k_\lambda, \delta}$, while
\begin{align*}
\sup_{j \in \Z^2} \#\{\lambda \in \Lambda^0: Q_{k_\lambda,\delta} \cap Q_{j,1} \not= \emptyset\} \leq C_\eta. 
\end{align*}
Thus, for $f \in V^p_a(\R^2)$,
\begin{align*}
\sum_{\lambda \in \Lambda^0} |f(\lambda)|^p 
&\lesssim \sum_{\lambda \in \Lambda^0} \delta^{-1} \int_{Q_{k_\lambda,\delta} \cap \Lambda} |f|^p \, ds + C \delta^p 
\sum_{\lambda \in \Lambda^0} \sup_{Q_{k_\lambda,\delta}} |\nabla f|^p
\\
&\leq \delta^{-1} \int_\Lambda |f|^p \, ds + C'_\eta  
\delta^p
\sum_{k \in \Z^1} \sup_{Q_{k,\delta}} |\nabla f|^p
\\
&\leq \delta^{-1} \int_\Lambda |f|^p \, ds + C''_\eta  
\delta^p
\|f\|^p_p.
\end{align*}

Hence, if $\delta$ is small enough so that $A'C''_\eta 
\delta^p <1/2$, we conclude that $\Lambda$ satisfies \eqref{eq_a1} for an adequate constant $A>0$. In addition, \eqref{eq_s5}  and \eqref{eq_a4} show that
$\int_\Lambda |f|^p \,ds \lesssim \|f\|_p^p$, and therefore $\Lambda$ is a ST for $V^p_a(\R^2)$.
\end{proof}

We can now prove prove the characterization of sampling trajectories.
\begin{proof}[Proof of Proposition \ref{prop_s3}]
\noindent {\bf (a) $\Leftrightarrow$ (b)} follows directly from Proposition \ref{prop_s1} and Lemma \ref{lemma_s2}.

\noindent {\bf (b) $\Rightarrow$ (c)}. Suppose that $\Lambda$ is a ST for $V^\infty_a(\R^2)$ and let $\Gamma'=\lim_k \Gamma+\vv\cdot(n_k,m_k) \in W_\Sigma(\Gamma)$. By Lemma \ref{lemma_s2} there exists $\Lambda^0 \subset \Lambda$ a separated SS for $V^\infty_a(\R^2)$. By passing to a subsequence, we may assume that $\La^0+(n_k,m_k)$ converges weakly to a set $\La^{1}$. Since
\begin{align*}
\La^0+(n_k,m_k) \subset
\La+(n_k,m_k)=\{(x,y):(x,y)\cdot \vv\in\Gamma+\vv\cdot(n_k,m_k)\},
\end{align*}
it follows that $\La^1 \subset \La'$, as defined in \eqref{la2}. On the other hand, Theorem \ref{th_i3} and Proposition \ref{prop_s1} imply that $\La^{1}$ is a SS for $V^\infty_a(\R^2)$. Hence, the larger set $\La'$ is also a ST for $V^\infty_a(\R^2)$.

\noindent {\bf (c) $\Rightarrow$ (d)} is obvious.

\noindent {\bf (d) $\Rightarrow$ (a)}. Suppose that $\Lambda$ is not a SS for $V^\infty_a(\R^2)$. Taking into account \eqref{rb}, this means that there is a sequence of functions
$$
f_k(x,y):=
\sum_{(n,m) \in \Z^2}c^{(k)}_{n,m} e^{-a(x-n)^2 - a(y-m)^2}, \qquad k \in \N,$$
with $\sup_{(n,m) \in \Z} |c^{(k)}_{n,m}|=1$ and
$\sup_{(\lambda_1,\lambda_2)\in\La}|f_k(\lambda_1,\lambda_2)|<\frac{1}{k}$.
For each $k \in \N$ we select $(n_k,m_k) \in \Z^2$ such that $|c^{(k)}_{n_k,m_k}| > 1/2$ and set $d^{(k)}_{n,m} : =c^{(k)}_{n+n_k,m+m_k}$,
$$
g_k(x,y):=\sum_{(n,m)\in\Z^2}d^{(k)}_{n,m} e^{-a(x-n)^2 - a(y-m)^2}\in V^\infty_a(\R^2),$$
and $\La_k:=\La-(n_k,m_k)=\{(x,y):(x,y)\cdot \vv\in\Gamma-\vv\cdot(n_k,m_k)\}$. Hence, \[\sup_{(\lambda_1,\lambda_2)\in\La_k}|g_k(\lambda_1,\lambda_2)|<\frac{1}{k}.\]
Passing to subsequences, we may assume that (i) $\Gamma+\vv\cdot(-n_k,-m_k)$ converge weakly to some set $\Gamma' \in W_{\Sigma}(\Gamma)$ and that (ii) $d^{(k)}_{n,m} \to d_{n,m}$ as $k \to \infty$. It follows that the functions $g_k$ converge uniformly on compact subsets of $\R^2$ to a function
$$
g(x,y):=\sum_{(n,m)\in\Z^2}d_{n,m} e^{-a(x-n)^2 - a(y-m)^2}\in V^\infty_a(\R^2).$$
This function vanishes on the set $\Lambda'$ defined in \eqref{la2}, and it is not identically zero because $|d_{0,0}| \ge 1/2$. This contradicts (d) and concludes the proof.
\end{proof}

\section{Acknowledgements}
Ilya Zlotnikov is grateful to Markus Faulhuber and Irina Shafkulovska for fruitful discussions. The authors are grateful to the anonymous referee for the helpful remarks.
\bibliographystyle{abbrv}
\bibliography{biblio}

\end{document}